\documentclass[reqno,10pt,centertags, draft]{amsart}
\usepackage{amsmath,amsthm,amscd,amssymb,latexsym,upref,mathrsfs}




\newcommand{\bbN}{{\mathbb{N}}}
\newcommand{\bbR}{{\mathbb{R}}}

\newcommand{\bbC}{{\mathbb{C}}}

\newcommand{\dC}{{\mathbb{C}}}
\newcommand{\dR}{{\mathbb{R}}}

\newcommand{\cA}{{\mathcal A}}
\newcommand{\cB}{{\mathcal B}}
\newcommand{\cC}{{\mathcal C}}
\newcommand{\cD}{{\mathcal D}}

\newcommand{\cG}{{\mathcal G}}
\newcommand{\cH}{{\mathcal H}}

\newcommand{\cL}{{\mathcal L}}

\newcommand{\cS}{{\mathcal S}}

\newcommand{\cX}{{\mathcal X}}

\newcommand{\sH}{{\mathfrak H}}

\newcommand{\no}{\notag}
\newcommand{\lb}{\label}

\newcommand{\ol}{\overline}

\newcommand{\wti}{\widetilde}

\newcommand{\f}{\frac}
\newcommand{\bi}{\bibitem}

\def\senki{{\lbrack\negthinspace [\bot ]\negthinspace\rbrack}}
\def\senki+{{\lbrack\negthinspace [+] \negthinspace\rbrack}}


\renewcommand{\Im}{\mathop\mathrm{Im}}
\renewcommand{\ge}{\geqslant}

\DeclareMathOperator{\dom}{dom}
\DeclareMathOperator{\ran}{ran}

\allowdisplaybreaks \numberwithin{equation}{section}

\newtheorem{theorem}{Theorem}[section]

\newtheorem{lemma}[theorem]{Lemma}
\newtheorem{corollary}[theorem]{Corollary}
\newtheorem{definition}[theorem]{Definition}
\newtheorem{hypothesis}[theorem]{Hypothesis}

\theoremstyle{remark}
\newtheorem{remark}[theorem]{Remark}

\begin{document}

\title[The Third Green Identity]{Coupling of symmetric operators
and the third Green Identity}

\author[J.\ Behrndt]{Jussi Behrndt}
\address{Institut f\"ur Numerische Mathematik, Technische Universit\"at
Graz, Steyrergasse 30, 8010 Graz, Austria}
\email{behrndt@tugraz.at}
\urladdr{www.math.tugraz.at/~behrndt/}

\author[V.\ Derkach]{Vladimir Derkach}
\address{Department of Mathematics, Dragomanov National Pedagogical University,
Kiev, Pirogova 9, 01601, Ukraine}
\email{derkach.v@gmail.com}

\author[F.\ Gesztesy]{Fritz Gesztesy}
\address{Department of Mathematics,
University of Missouri, Columbia, MO 65211, USA}
\email{gesztesyf@missouri.edu}
\urladdr{http://www.math.missouri.edu/personnel/faculty/gesztesyf.html}
\address{Address after August 1, 2016: Department of Mathematics
Baylor University, One Bear Place \#97328,
Waco, TX 76798-7328, USA}
\email{Fritz$\_$Gesztesy@baylor.edu}


\author[M.\ Mitrea]{Marius Mitrea}
\address{Department of Mathematics,
University of Missouri, Columbia, MO 65211, USA}
\email{mitream@missouri.edu}
\urladdr{http://www.math.missouri.edu/personnel/faculty/mitream.html}

\thanks{J.\,B.\ gratefully acknowledges financial support by the Austrian
Science Fund (FWF), project P 25162-N26; M.\,M.\ is indebted to the Simons Foundation for support under Grant $\#$\,281566; \,
V.D. is indebted to the Fulbright Fund for support under Grant $\#$\, 68130028}


\date{\today}

\subjclass[2010]{Primary 47A10, 47A57, 47B25, 58J05, 58J32; Secondary 47A55, 47B15, 58J50.}
\keywords{Symmetric operators, self-adjoint extensions, generalized resolvents, boundary
triples, Weyl--Titchmarsh functions, Schr\"odinger operator, Lipschitz domain.}

\begin{abstract}
The principal aim of this paper is to derive an abstract form of the third Green identity
associated with a proper extension $T$ of a symmetric operator $S$ in a Hilbert space $\mathfrak H$, employing the technique of quasi boundary triples for $T$.
The general results are illustrated with couplings of Schr\"{o}dinger operators on Lipschitz domains on smooth, boundaryless Riemannian manifolds.
\end{abstract}

\maketitle

{\scriptsize \tableofcontents}

\section{Introduction}  \lb{s1}

The origin of this paper can be traced back to the following innocent question: {\it How to rule
out that a Dirichlet eigenfunction of the Laplacian, or, more generally, a uniformly elliptic
second order partial differential operator on a nonempty, open, bounded domain
$\Omega \subset \bbR^n$, $n \in \bbN$, with sufficiently regular boundary, is also simultaneously a Neumann eigenfunction?}

There are, of course, several immediate answers. For instance, in the case of the Dirichlet Laplacian on a sufficiently regular, open, bounded, domain $\Omega \subset \bbR^n$,
\begin{equation}
- \Delta u = \lambda u, \quad u \upharpoonright_{\partial\Omega} = 0, \; u \in H^2(\Omega),
\end{equation}
Rellich's identity from \cite{Re40} for (necessarily real) Dirichlet eigenvalues $\lambda$ reads,
\begin{equation}
\lambda = \f{1}{4 \|u\|^2_{L^2(\Omega)}} \int_{\partial\Omega}
\bigg(\f{\partial u}{\partial \nu}(\xi)\bigg)^2 \bigg(\f{\partial |x|^2}{\partial \nu}(\xi)\bigg)
d^{n-1}\omega (\xi).
\end{equation}
Here $x = (x_1,\dots,x_n) \in \bbR^n$ lies in a neighborhood of $\partial \Omega$,
$d^{n-1} \omega$ denotes the surface measure on $\partial \Omega$, $\nu$ is the outward pointing
unit normal vector at points of $\partial \Omega$, and
$\partial / \partial \nu$ represents the normal derivative,
\begin{equation}
 \frac{\partial}{ \partial \nu} := \nu(\xi) \cdot \nabla_{\xi},
\quad \xi \in \partial \Omega.
\end{equation}
Thus, vanishing of the normal derivative (i.e., the Neumann boundary condition)
$\partial u/\partial \nu|_{\partial\Omega} = 0$ yields $\lambda = 0$ which contradicts the well-known
fact that the Dirichlet Laplacian is strictly positive on bounded (in fact, finite Euclidean volume) domains $\Omega$. (More general domains 
such as Lipschitz could be discussed in the context of the examples mentioned in this introduction, but for brevity we stick to 
sufficiently regular, say, $C^2$-domains,  throughout. We will, however, consider Lipschitz domains on a smooth, boundaryless manifold in Section \ref{s5}.)

A second approach, based on
\begin{equation}
\mathring H^2(\Omega) = \big\{v \in H^2(\Omega) \, \big| \, u \upharpoonright_{\partial\Omega} = (\partial_{\nu} u) \upharpoonright_{\partial\Omega} = 0\big\},
\end{equation}
again for $\Omega$ sufficiently regular, shows that
\begin{equation}
- \Delta u = \lambda u, \quad u \upharpoonright_{\partial\Omega}
= (\partial_{\nu} u) \upharpoonright_{\partial\Omega} = 0, \; u \in H^2(\Omega),    \lb{1.5}
\end{equation}
cannot have any nonzero solution $u$ as the zero extension $\wti u$ of $u$ outside
$\Omega$ lies in $H^2(\bbR^n)$ and hence $- \Delta$ on $\bbR^n$ would have a compactly supported eigenfunction, clearly a contradiction.
This extends to more general uniformly elliptic second order partial differential operators via unique continuation principles, see, for instance, \cite{Ar57}, \cite{AKS62}, \cite{F01}, \cite{JK85}, \cite{KT01}, \cite{KT02}, and \cite{Wo93}.

An approach, intimately related to the second approach, adding a functional analytic flavor, would employ the fact that the Dirichlet and Neumann Laplacians in
$L^2(\Omega)$, denoted by $-\Delta_{D,\Omega}$ and $-\Delta_{N,\Omega}$, respectively, are relatively prime and hence satisfy
\begin{equation}
\dom(-\Delta_{D,\Omega}) \cap \dom(-\Delta_{N,\Omega}) = \mathring{H}^2(\Omega)
= \dom(-\Delta_{min,\Omega}).
\end{equation}
Here the associated minimal and maximal operators in $L^2(\Omega)$ are of the form
\begin{align}
& -\Delta_{min,\Omega} = -\Delta, \quad \dom(-\Delta_{min,\Omega}) = \mathring{H}^2(\Omega), \\
& -\Delta_{max,\Omega}:=-\Delta,\quad
\dom(-\Delta_{max,\Omega}):=\big\{f\in L^2(\Omega)\,\big|\,\Delta f\in L^2(\Omega)\big\},
\end{align}
where the expression $\Delta f$, $f\in L^2(\Omega)$, is understood in the sense of distributions,
and one has the relations
\begin{equation}
-\Delta_{min,\Omega}^* = -\Delta_{max,\Omega}, \quad
-\Delta_{min,\Omega} = -\Delta_{max,\Omega}^*
\end{equation}
(see, for instance, \cite[Sect.~3]{BGMM16}).
Invoking the fact that the minimal operator $-\Delta_{min,\Omega}$ is simple (i.e., it has no invariant subspace on which it is self-adjoint), and simple operators have no eigenvalues, $-\Delta_{min,\Omega}$ cannot have any eigenvalues, thus, no nonzero solution $u$ satisfying \eqref{1.5} exists. For recent results of this type see, for instance, \cite[Proposition~2.5]{BR12}. Upon modifications employing appropriate Dirichlet and Neumann traces this approach remains applicable to the more general case of uniformly elliptic second order partial differential operators on Lipschitz domains $\Omega$ (see, e.g., \cite{BGMM16}, \cite{BR12}).

Perhaps, a most illuminating proof of the impossibility of a Dirichlet eigenfunction to be simultaneously a Neumann eigenfunction can be based on the third Green identity, which naturally leads to one of the principal topics of this paper. Assuming again
$\partial \Omega$ to be sufficiently regular (we will treat the case of Lipschitz domains in Section \ref{s5}), we note the following well-known special case of the third Green identity (see, e.g.,
\cite{Co88}, \cite[Theorem~6.10]{Mc00}),
\begin{align}
\begin{split}
u(x) = (\cG_{z} (- \Delta - z) u)(x)
+ (\cD_{z} u)(x) - (\cS_{z} (\partial_\nu u))(x),& \\
u \in H^2(\Omega), \; z \in \bbC, \; x \in \Omega,&  \lb{1.9}
\end{split}
\end{align}
in terms of the resolvent operator $\cG_{z}$, and the single and double layer potentials
$\cS_{z}$ and $\cD_{z}$, $z \in \bbC$, defined by
\begin{align}
& (\cG_{z} f)(x) = \int_{\Omega} E_n^{(0)}(z; x-y) f(y) \, d^n y, \quad f \in L^2(\Omega), \; x \in \Omega,   \lb{1.10} \\
& (\cS_{z} v)(x) = \int_{\partial \Omega} E_n^{(0)} (z; x-\xi) v(\xi) \, d^{n-1} \omega(\xi),
\quad v \in L^2(\partial \Omega), \; x \in \Omega,   \lb{1.11} \\
& (\cD_{z} v)(x) = \int_{\partial \Omega} \big(\partial_{\nu(\xi)} E_n^{(0)}\big) (z; x-\xi) v(\xi) \, d^{n-1} \omega(\xi), \quad v \in L^2(\partial \Omega), \; x \in \Omega.   \lb{1.12}
\end{align}
Here $E_n^{(0)}(z;x)$ represents the fundamental solution of the Helmholtz differential expression $(-\Delta -z)$ in $\bbR^n$, $n\in\bbN$, $n\geq 2$, that is,
\begin{align}
& E_n^{(0)}(z;x) = \begin{cases}
(i/4) \big(2\pi |x|/z^{1/2}\big)^{(2-n)/2} H^{(1)}_{(n-2)/2}
\big(z^{1/2}|x|\big), & n\geq 2,
\; z\in\bbC\backslash \{0\}, \\
\f{-1}{2\pi} \ln(|x|), & n=2, \; z=0, \\
\f{1}{(n-2)\omega_{n-1}}|x|^{2-n}, & n\geq 3, \; z=0,
\end{cases}    \no \\
& \hspace*{6.85cm} \Im\big(z^{1/2}\big)\geq 0,\; x\in\bbR^n\backslash\{0\},   \lb{1.13}
\end{align}
with $H^{(1)}_{\nu}(\, \cdot \,)$ denoting the Hankel function of the first kind
with index $\nu\geq 0$ (cf.\ \cite[Sect.\ 9.1]{AS72}).

Thus, if $u$ is assumed to satisfy \eqref{1.5}, the third Green identity \eqref{1.9} instantly yields
$u \upharpoonright_{\Omega} = 0$ and hence the nonexistence of nontrivial solutions $u$ satisfying \eqref{1.5}. Again, this approach extends to the more general case of uniformly elliptic second order partial differential operators $\cL$ by appropriately replacing the Helmholtz Green's function  $G_n^{(0)} (z; x,y) = E_n^{(0)} (z; x-y)$
of $\cL^{(0)} = - \Delta - z$ in $\bbR^n$ in \eqref{1.9}--\eqref{1.13} by the associated Green's function $G_n(z; x,y)$ of $\cL - z$ in $\bbR^n$.

Although we were interested in properties of Dirichlet eigenfunctions, that is, eigenfunctions of the Dirichlet Laplacian $- \Delta_{D, \Omega}$ in $L^2(\Omega)$, the third Green identity \eqref{1.9} naturally involved the Helmholtz Green's function
$G_n^{(0)} (z; x,y) = E_n^{(0)} (z; x-y)$ for the Laplacian in $L^2(\bbR^n)$. The latter obviously has no knowledge of $\Omega$ and $\partial \Omega$. Moreover, denoting
\begin{equation}
\Omega_+ := \Omega, \quad \Omega_- := \bbR^n \backslash \ol{\Omega},  \lb{1.14}
\end{equation}
with $\Omega_-$ the open exterior of $\Omega$, one is naturally led to a comparison of the Dirichlet Laplacian
\begin{equation}
(- \Delta_{D,\Omega_+}) \oplus (- \Delta_{D, \Omega_-}) \, \text{ in } L^2(\Omega_+) \oplus L^2(\Omega_-) \simeq L^2(\bbR^n)     \lb{1.15}
\end{equation}
and the Laplacian $- \Delta$ in $L^2(\bbR^n)$ (with domain $H^2(\bbR^n)$). While the Dirichlet Laplacian $(- \Delta_{D,\Omega_+}) \oplus (- \Delta_{D, \Omega_-})$ corresponds to a complete decoupling of $\bbR^n$ into $\Omega_+ \cup \Omega_-$ (ignoring the compact boundary $\cC := \partial \Omega_{\pm}$ of $n$-dimensional Lebesgue measure zero), in stark contrast to this decoupling, the Laplacian $- \Delta$ on $H^2(\bbR^n)$ couples $\Omega_+$ and $\Omega_-$ via the imposition of continuity conditions accross $\cC$ of the form
\begin{equation}
u_+\upharpoonright_{\partial \Omega_+} = u_-\upharpoonright_{\partial \Omega_-},
\quad - \partial_{\nu} u_+\upharpoonright_{\partial \Omega_+} = \partial_{\nu} u_-\upharpoonright_{\partial \Omega_-}.    \lb{1.16}
\end{equation}
Here we identified $u \in L^2(\bbR^n)$ with the pair $(u_+, u_-) \in L^2(\Omega_+) \oplus L^2(\Omega_-)$ via $u_{\pm} = u \upharpoonright_{\Omega_{\pm}}$. The relative sign change in the normal derivatives in the second part of \eqref{1.16} is of course being dictated by the opposite orientation of $\nu$ at a point of $\cC = \partial \Omega_{\pm}$. (It should be said that at this point we are purposely a bit cavalier about boundary traces, etc., all this will be developed with complete rigor in the bulk of this paper.) It is this coupling of $\Omega_+$ and
$\Omega_-$ through their joint boundary $\cC$ via the Laplacian $- \Delta$ on
$\bbR^n$ via the continuity requirements \eqref{1.16} that's the second major topic in this paper.

In fact, from this point of view, the open exterior domain $\Omega_-$ is on a similar level as the original domain $\Omega = \Omega_+$ (apart from being unbounded) and introducing the jumps of $u$ and $\partial_{\nu} u$ across $\cC= \partial \Omega_{\pm}$ via
\begin{equation}
[u] := u_+\upharpoonright_{\partial \Omega_+} - u_-\upharpoonright_{\partial \Omega_-}, \quad [\partial_{\nu} u] := - \partial_{\nu} u_+\upharpoonright_{\partial \Omega_+} - \partial_{\nu} u_-\upharpoonright_{\partial \Omega_-},    \lb{1.17}
\end{equation}
the third Green identity \eqref{1.9} can be shown to extend to the following form  (symmetric w.r.t. $\Omega_{\pm}$, cf., e.g., \cite{Co88}, \cite[Theorem~6.10]{Mc00}),
\begin{align}
\begin{split}
u(x) = (\cG_{z} (- \Delta - z) u)(x)
+ (\cD_{z} [u])(x) - (\cS_{z} ([\partial_\nu u]))(x),&   \\
u=(u_+, u_-), \; u_{\pm} \in H^2(\Omega_{\pm}), \; z \in \bbC, \; x \in \bbR^n \backslash \cC.&  \lb{1.18}
\end{split}
\end{align}

At this point we can describe the major objectives of this paper: Decompose a given complex, separable Hilbert space $\sH$ into an orthogonal sum of closed subspaces
$\sH_{\pm}$ as $\sH = \sH_+ \oplus \sH_-$, consider densely defined, closed  symmetric operators $S_{\pm}$ in $\sH_{\pm}$ and their direct sum
$S = S_+ \oplus S_-$ in $\sH$, introduce restrictions $T_{\pm}$ of $S_{\pm}^*$ such that $\ol{T_{\pm}} = S_{\pm}^*$ and appropriate restrictions $A_{0,\pm}$ of $T_{\pm}$, for instance, $A_{0,\pm}$ self-adjoint in $\sH_{\pm}$, defined in terms of certain abstract boundary conditions, and then find a self-adjoint operator $A$ in $\sH$ which closely resembles $A_0 = A_{0,+} \oplus A_{0,-}$, but without any remnants of the boundary conditions in $A_{0,+} \oplus A_{0,-}$ and without any reference to the decomposition of $\sH$ into $\sH_+ \oplus \sH_-$ (i.e., $A$ naturally couples $\sH_{\pm}$ in terms of certain continuity requirements through an abstract ``boundary''). Finally, derive an abstract third Green identity invoking the resolvent (resp., the Green's function $\cG$) of $A$, the operator
$T= T_+ \oplus T_-$, and abstract single and double layer operators constructed from $\cG$. This can indeed be achieved with the help of an appropriate quasi boundary triple for $T$ which also permits one to introduce a natural abstract analog of the ``boundary Hilbert space'' $L^2(\cC)$ in the concrete case of the Laplacian above.

In Section \ref{s2} we briefly recall the basic setup for quasi boundary triples and associated operator-valued Weyl-functions (also called Weyl--Titchmarsh functions) as needed in this paper. The introduction of quasi boundary triples is intimately connected with an abstract (second) Green identity. Section \ref{s3} studies the operator $A$ and derives Krein-type resolvent formulas for it in terms of $A_0$ and a related operator. Section \ref{s4} derives the abstract third Green identity, and finally Section \ref{s5} illustrates the abstract material in Sections \ref{s2}--\ref{s4} with the concrete case of Schr\"odinger operators on Lipschitz domains on smooth, boundaryless Riemannian manifolds.

Finally, we briefly summarize the basic notation used in this paper: Let
$\cH$, $\sH$ be a separable complex Hilbert spaces, $(\cdot,\cdot)_{\cH}$ the
scalar product in $\cH$ (linear in the second factor), and $I_{\cH}$ the identity operator
in $\cH$. If $T$ is a linear operator mapping (a subspace of\,) a
Hilbert space into another, $\dom(T)$ denotes the domain of $T$. The closure
of a closable operator $S$ is denoted by $\ol S$. The spectrum and
resolvent set of a closed linear operator in $\cH$ will be denoted by
$\sigma(\cdot)$  and $\rho(\cdot)$, respectively.
The Banach spaces of bounded linear operators in $\cH$ are
denoted by $\cB(\cH)$; in the context of two
Hilbert spaces, $\cH_j$, $j=1,2$, we use the analogous abbreviation
$\cB(\cH_1, \cH_2)$. The set of all closed linear operators
in ${\cH}$ is denoted by ${\cC}({\cH})$. Moreover,
$\cX_1 \hookrightarrow \cX_2$ denotes the continuous embedding of the Banach space $\cX_1$ into the Banach space $\cX_2$.
We also abbreviate $\bbC_{\pm} := \{z \in \bbC \, | \, \Im(z) \gtrless 0\}$.

\section{Quasi Boundary Triples and their Weyl Functions}  \lb{s2}

In this section we briefly recall the notion of quasi boundary triples and the associated (operator-valued) Weyl functions.

In the following let $S$ be a  densely defined closed symmetric operator in $\sH$.

\begin{definition}\label{Def-triple}
Let $T\subseteq S^*$ be a linear operator in $\sH$ such that $\overline T=S^*$.
A triple $\{\cH,\Gamma_0,\Gamma_1\}$ is called a {\em quasi boundary triple}
for $T$ if $(\cH,(\, \cdot \,, \, \cdot \,)_\cH)$ is a Hilbert space and
$\Gamma_0,\Gamma_1:\dom (T)\rightarrow\cH$
are linear mappings such that the following items $(i)$--$(iii)$ hold:  \\[1mm]
$(i)$ The abstract $($second\,$)$ Green identity
\begin{equation}\label{green1}
      (T f,g)_\sH - (f,Tg)_\sH
      = (\Gamma_1 f,\Gamma_0 g)_\cH
      - (\Gamma_0 f,\Gamma_1 g)_\cH, \quad f,g\in\dom (T),
\end{equation}
is valid. \\[1mm]
$(ii)$ The map $\Gamma = (\Gamma_0,\Gamma_1)^\top : \dom (T) \rightarrow
    \cH^2$ has dense range. \\[1mm]
$(iii)$ $A_0 := T\upharpoonright \ker(\Gamma_0)$ is a self-adjoint operator in $\sH$.
\end{definition}

The notion of quasi boundary triples was introduced in \cite{BL07} and generalizes the concepts of ordinary (and generalized) boundary triples, see, for instance,
\cite{Br76}, \cite{DM95}, \cite{GG91}, \cite{Ko75}, and the references therein. We 
recall that the triple in Definition~\ref{Def-triple} is called an {\it ordinary boundary triple} ({\it generalized boundary triple}) if item $(ii)$ is replaced by the condition
$\ran (\Gamma)=\cH^2$ ($\ran (\Gamma_0)=\cH$, respectively).
On the other hand, the notion of quasi boundary triple is a partial case of the notion of isometric/unitary boundary triples which goes back to Calkin~\cite{Ca39} and
was studied in detail in~\cite{DHMS06}, \cite{DHMS12}.

We recall briefly some important properties of quasi boundary triples. First of all, we note that
a quasi boundary triple for $S^*$ exists if and only if the defect numbers
\begin{equation}
 n_\pm(S)=\dim(\ker(S^*\mp i))
\end{equation}
of $S$ are equal. Next, assume that $\{\cH,\Gamma_0,\Gamma_1\}$ is a quasi boundary triple for $T\subseteq S^*$.
Then the mapping $\Gamma = (\Gamma_0,\Gamma_1)^\top: \dom (T) \to  \cH^2$ is
closable
and $\ker (\Gamma) = \dom (S)$ holds (cf.~\cite[Proposition~2.2]{BL07}). According to \cite[Theorem~2.3]{BL07}
one has
$T=S^*$ if and only if $\ran (\Gamma) = \cH^2$, in this case  the
restriction $A_0=S^*\upharpoonright \ker(\Gamma_0)$ is automatically self-adjoint and the the quasi boundary triple $\{\cH,\Gamma_0,\Gamma_1\}$ is an ordinary boundary triple
in the usual sense. In this context we also note that in the case of finite deficiency indices of $S$ a quasi boundary triple is automatically an ordinary
boundary triple.

Next, the notion of the {\em $\gamma$-field} and {\em Weyl function} associated to a quasi boundary triple will be recalled. The definition is formally the same
as in the case of ordinary and generalized boundary triples. First, one observes that for
each $z \in\rho(A_0)$, the direct sum decomposition
\begin{equation}
  \dom (T) = \dom (A_0)\,\dot +\,\ker(T-z I_{\sH})
  = \ker(\Gamma_0)\,\dot +\,\ker(T-z I_{\sH})
\end{equation}
holds. Hence
the restriction of the mapping $\Gamma_0$ to $\ker(T-z I_{\sH})$
is injective and its range coincides with $\ran(\Gamma_0)$.

\begin{definition}
Let $\{\cH,\Gamma_0,\Gamma_1\}$ be a quasi boundary triple for $T\subseteq S^*$.
The {\em $\gamma$-field} $\gamma$ and the {\em Weyl function} $M$  corresponding to  $\{\cH,\Gamma_0,\Gamma_1\}$ are defined by
\begin{equation}
\rho(A_0) \ni z \mapsto \gamma(z):=\bigl(\Gamma_0\upharpoonright\ker(T-z I_{\sH})\bigr)^{-1},
\end{equation}
and
\begin{equation}
\rho(A_0) \ni z \mapsto M(z) := \Gamma_1 \gamma(z),
\end{equation}
respectively.
\end{definition}

The notions of the  $\gamma$-field and the Weyl function corresponding to
ordinary and generalized boundary triples were introduced in~\cite{DM91} and~\cite{DM95}, respectively.
In both cases the Weyl function $M$ turns out to be a Herglotz--Nevanlinna function with values in $\cB(\cH)$, that is, $M$ is holomorphic on $\bbC\backslash\bbR$, and
\begin{equation}\label{jaok}
\Im(z) \Im (M(z)) \ge 0 \, \text{ and } \, M(z)=M(\ol z)^*, \quad z\in\bbC\backslash\bbR.
\end{equation}
The values of the $\gamma$-field are bounded operators from $\cH$ into $\sH$ with $\ran(\gamma(z))=\ker(T-z I_{\sH})$
and the following identity holds
\begin{equation}\label{jaok2}
M(z)-M(z)^*=(z-\bar z)\gamma(z)^*\gamma(z),\quad  z\in\rho(A_0).
\end{equation}
In the case of a quasi boundary triple the operators $\gamma (z)$, $z\in\rho(A_0)$, are defined on the dense subspace $\ran(\Gamma_0)\subseteq \cH$ and map onto
$\ker(T-z I_{\sH}) \subset\sH$. By \cite[Proposition~2.6]{BL07} the operator $\gamma(z)$ is bounded and hence admits a continuous extension onto $\cH$.
Furthermore, one has
\begin{equation}\label{gsgs}
  \gamma(z)^*:\sH\rightarrow\cH,\quad f\mapsto\gamma(z)^*f
  = \Gamma_1(A_0-\ol{z} I_{\sH})^{-1}f,\quad z\in\rho(A_0).
\end{equation}
The values of the Weyl function $M(z)$, $z \in \rho(A_0)$  are operators in $\cH$ defined on
$\ran(\Gamma_0)$ and mapping into $\ran(\Gamma_1)$.
The analogs of \eqref{jaok} and \eqref{jaok2}, and various other useful and important properties of the Weyl function
can be found in \cite{BL07}, \cite{BL12}. In particular,
\begin{equation}
M(z)\subseteq M(\ol{z})^*, \quad z \in \rho(A_0),
\end{equation}
and hence
the operators $M(z)$, $z \in \rho(A_0)$, are closable operators in $\cH$.
We point out that the operators $M(z)$, $z \in \rho(A_0)$, and their closures
are generally unbounded.

\section{The Coupling Model}\lb{s3}

In this section we discuss the coupling issue mentioned in \eqref{1.14}--\eqref{1.16} from a purely abstract point of view.

Let $S_+$ and $S_-$ be densely defined closed symmetric operators in the separable Hilbert spaces $\sH_+$ and $\sH_-$, respectively, and assume that
the defect indices of $S_+$ and $S_-$ satisfy
\begin{equation}
 n_+(S_+)=n_-(S_+)=n_+(S_-)=n_-(S_-)=\infty.
\end{equation}
The case of finite defect numbers can be treated with the help of ordinary boundary triples in an efficient way and will not be discussed
here (cf.\ \cite{DHMS00}).

Let $T_+$ and $T_-$ be such that $\overline T_+=S_+^*$ and $\overline T_-=S_-^*$, and assume that $\{\cH,\Gamma_0^+,\Gamma_1^+\}$
and  $\{\cH,\Gamma_0^-,\Gamma_1^-\}$ are quasi boundary triples for $S_+^*$ and $S_-^*$, respectively. The corresponding $\gamma$-fields
and Weyl functions are denoted by $\gamma_+$ and $\gamma_-$, and $M_+$ and $M_-$, respectively. Furthermore, let
\begin{equation}
A_{0,+}=T_+\upharpoonright\ker(\Gamma_0^+), \quad A_{0,-}=T_-\upharpoonright\ker(\Gamma_0^-).
\end{equation}
It is important to note that the identities
\begin{equation}\label{gamstar}
 \gamma_+(z)^*=\Gamma_1^+(A_{0,+}-\ol{z})^{-1}, \quad
 \gamma_-(z)^*=\Gamma_1^-(A_{0,-}-\ol{z})^{-1}, 
\end{equation}
hold for all $z \in \rho(A_{0,+})$ and $z \in \rho(A_{0,-})$, respectively (cf.\ \eqref{gsgs}).
In the following consider the operators
\begin{equation}\label{sts}
 S=\begin{pmatrix} S_+ & 0 \\ 0 & S_-\end{pmatrix}, \quad T=\begin{pmatrix} T_+ & 0 \\ 0 & T_-\end{pmatrix}, \quad S^*=\begin{pmatrix} S_+^* & 0 \\ 0 & S_-^*\end{pmatrix},
\end{equation}
in the Hilbert space $\sH=\sH_+\oplus\sH_-$. It is clear that $S$ is a closed, densely defined, 
symmetric operator in $\sH$ with equal infinite defect numbers,
\begin{equation}
n_+(S)=n_-(S)=\infty,
\end{equation}
and that
\begin{equation}
\ol{T} = S^*. 
\end{equation}
The elements $f$ in the domain of $S$, $T$ and
$S^*$ will be written as two component vectors of the form $f=(f_+,f_-)^\top$, where
$f_\pm$ belongs to the domain of $S_\pm$, $T_\pm$ and $S_\pm^*$, respectively. It is easy to see that $\{\cH\oplus\cH,\Gamma_0,\Gamma_1\}$, where
\begin{equation}\label{qbtcoup}
\Gamma_0f=\begin{pmatrix} \Gamma_0^+f_+\\ \Gamma_0^-f_-\end{pmatrix}, \quad\Gamma_1f=\begin{pmatrix} \Gamma_1^+f_+\\ \Gamma_1^-f_-\end{pmatrix},\quad f\in\dom (T),
\end{equation}
is a quasi boundary triple for $S^*$ such that
\begin{equation}\label{aaa}
 A_0=T\upharpoonright\ker(\Gamma_0)=\begin{pmatrix} T_+\upharpoonright\ker(\Gamma_0^+) & 0 \\ 0 & T_-\upharpoonright\ker(\Gamma_0^-)\end{pmatrix}
 =\begin{pmatrix} A_{0,+} & 0 \\ 0 & A_{0,-}\end{pmatrix}.
\end{equation}
One notes that $\rho(A_0)=\rho(A_{0,+})\cap\rho(A_{0,-})$.
The $\gamma$-field $\gamma$ and Weyl function $M$ corresponding to the quasi boundary triple $\{\cH\oplus\cH,\Gamma_0,\Gamma_1\}$ are given by
\begin{align}\label{gamgam}
 \gamma(z)&=\begin{pmatrix} \gamma_+(z) & 0 \\ 0 & \gamma_-(z) \end{pmatrix},
 \quad z \in \rho(A_0),   \\
 M(z)&=\begin{pmatrix} M_+(z) & 0 \\ 0 & M_-(z) \end{pmatrix}, \quad
 z \in \rho(A_0),
\end{align}
and \eqref{gamstar} implies
\begin{equation}\label{gamgamstar}
 \gamma(z)^*f=\begin{pmatrix}
               \Gamma_1^+(A_{0,+}-\ol{z})^{-1} f_+ \\ \Gamma_1^-(A_{0,-}-\ol{z})^{-1}f_-
              \end{pmatrix},\quad f=(f_+,f_-)^\top\in\sH.
\end{equation}

The next result, Theorem \ref{resthm}, can be viewed as an abstract analogue of the coupling of differential operators, where $\Gamma_0^\pm$ are Dirichlet trace operators
and $\Gamma_1^\pm$ are Neumann trace operators acting on different domains
(cf.\ Section~\ref{s5} for more details). We also note that in the following the operators
$M_+(z)+M_-(z)$ in $\cH$ are assumed to be defined on
$\dom(M_+(z)) \cap \dom(M_-(z))$.

\begin{theorem}\label{resthm}
 Let $\{\cH,\Gamma_0^+,\Gamma_1^+\}$ and  $\{\cH,\Gamma_0^-,\Gamma_1^-\}$ be quasi boundary triples for $S_+^*$ and $S_-^*$ with Weyl functions $M_\pm$ and define
 \begin{equation}
  A:= T\upharpoonright\bigl\{f=(f_+,f_-)^\top\in\dom (T) \,\big| \, \Gamma_0^+f_+=\Gamma_0^-f_-,\,\Gamma_1^+f_+=-\Gamma_1^-f_- \bigr\}.
 \end{equation}
Then the following assertions $(i)$--$(iii)$ hold: \\[1mm]
$(i)$ $A$ is a symmetric operator in $\sH$. \\[1mm]
$(ii)$ $z \in \rho(A_0)$ is an eigenvalue of $A$ if and only if
 \begin{equation}
  \ker\bigl(M_+(z)+M_-(z)\bigr)\not=\{0\}.
  \end{equation}
$(iii)$ $A$ is a self-adjoint operator in $\sH$ if and only if
 \begin{equation}
  \ran\bigl(\Gamma^\pm_1\upharpoonright\dom (A_{0,\pm}) \bigr)
  \subset \ran\bigl(M_+(z)+M_-(z)\bigr)
 \end{equation}
 holds for some $($and hence for all\,$)$ $z \in \dC_+$ and some
 $($and hence for all\,$)$ $z \in \dC_-$.
\\[2mm]
If $A$ is a self-adjoint operator in $\sH$ then for all $z \in \rho(A)\cap\rho(A_0)$ the resolvent of $ A$ is given in terms of a Krein-type resolvent formula by
\begin{equation}\label{r1}
 (A-z I_{\sH})^{-1}=(A_0-z I_{\sH})^{-1}+ \gamma(z)\Theta(z)
\gamma(\ol{z})^*,
\end{equation}
where
\begin{equation}\label{thetayes}
 \Theta(z) = - \begin{pmatrix} (M_+(z)+M_-(z))^{-1} & (M_+(z)+M_-(z))^{-1} \\
(M_+(z)+M_-(z))^{-1} & (M_+(z)+M_-(z))^{-1} \end{pmatrix}.
\end{equation}
\end{theorem}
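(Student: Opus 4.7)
The plan begins with part~(i). The block structure \eqref{qbtcoup} makes $\{\cH\oplus\cH,\Gamma_0,\Gamma_1\}$ a quasi boundary triple for $S^*$, so Green's identity \eqref{green1} applies to $T$. For $f,g\in\dom(A)$ I introduce the common boundary values $\varphi_f:=\Gamma_0^+f_+=\Gamma_0^-f_-$, $\psi_f:=\Gamma_1^+f_+=-\Gamma_1^-f_-$, and analogously $\varphi_g,\psi_g$. The right-hand side of \eqref{green1} then telescopes into $(\psi_f,\varphi_g)_\cH+(-\psi_f,\varphi_g)_\cH-(\varphi_f,\psi_g)_\cH-(\varphi_f,-\psi_g)_\cH=0$, proving the symmetry of $A$.

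For part~(ii), fix $z\in\rho(A_0)=\rho(A_{0,+})\cap\rho(A_{0,-})$ and use the direct-sum decomposition $\dom(T)=\dom(A_0)\,\dot+\,\ker(T-zI_{\sH})$. If $f=(f_+,f_-)^\top\in\dom(A)$ satisfies $(A-z)f=0$, then $f_\pm\in\ker(T_\pm-zI_{\sH_\pm})$ and hence $f_\pm=\gamma_\pm(z)\Gamma_0^\pm f_\pm$. The first coupling condition produces a single $\varphi:=\Gamma_0^\pm f_\pm\in\ran(\Gamma_0^\pm)$, and combining $\Gamma_1^\pm f_\pm=M_\pm(z)\varphi$ with the second coupling condition gives $(M_+(z)+M_-(z))\varphi=0$; nontriviality of $f$ forces $\varphi\neq 0$. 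Conversely, any nonzero $\varphi\in\ker(M_+(z)+M_-(z))$ automatically lies in $\dom(M_+(z))\cap\dom(M_-(z))\subseteq\ran(\Gamma_0^+)\cap\ran(\Gamma_0^-)$, and then $f_\pm:=\gamma_\pm(z)\varphi$ provides the required eigenvector of $A$.

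For part~(iii), and simultaneously for the Krein-type formula \eqref{r1}--\eqref{thetayes}, I analyse the equation $(A-z)f=g$ for $z\in\rho(A_0)$ and arbitrary $g=(g_+,g_-)^\top\in\sH$. The same decomposition forces $f_\pm=(A_{0,\pm}-z)^{-1}g_\pm+\gamma_\pm(z)\xi_\pm$ with $\xi_\pm=\Gamma_0^\pm f_\pm\in\ran(\Gamma_0^\pm)$; the first coupling condition collapses $\xi_+=\xi_-=:\xi$, while \eqref{gamstar} and the second coupling condition yield the boundary equation
\begin{equation*}
(M_+(z)+M_-(z))\xi=-\gamma_+(\overline z)^*g_+-\gamma_-(\overline z)^*g_-=-\Gamma_1^+(A_{0,+}-\overline z)^{-1}g_+-\Gamma_1^-(A_{0,-}-\overline z)^{-1}g_-.
\end{equation*}
As $g$ varies over $\sH$, the right-hand side fills the sum $\ran(\Gamma_1^+\upharpoonright\dom(A_{0,+}))+\ran(\Gamma_1^-\upharpoonright\dom(A_{0,-}))$, so the range hypothesis in (iii) is precisely what guarantees solvability of this boundary equation for every $g$, i.e., $\ran(A-z)=\sH$. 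Together with the symmetry obtained in (i), this forces $A$ to be self-adjoint once the hypothesis holds at some $z\in\dC_+$ and some $z\in\dC_-$. For the converse, specialise $g$ to $(g_+,0)^\top$ and to $(0,g_-)^\top$ and let $g_\pm$ range over $\sH_\pm$, so that $\Gamma_1^\pm(A_{0,\pm}-\overline z)^{-1}g_\pm$ sweeps through $\ran(\Gamma_1^\pm\upharpoonright\dom(A_{0,\pm}))$, which must then be contained in $\ran(M_+(z)+M_-(z))$.

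When $A$ is self-adjoint and $z\in\rho(A)\cap\rho(A_0)$, part~(ii) gives $\ker(M_+(z)+M_-(z))=\{0\}$, so $\xi=-(M_+(z)+M_-(z))^{-1}[\gamma_+(\overline z)^*g_++\gamma_-(\overline z)^*g_-]$ is unique, and substituting back into $f=(A_0-z)^{-1}g+\gamma(z)(\xi,\xi)^\top$ yields \eqref{r1} with $\Theta(z)$ as in \eqref{thetayes}, since the same $\xi$ appears in both components. I expect the main obstacle to be the careful bookkeeping of domains intrinsic to the \emph{quasi} (as opposed to ordinary) boundary triple setting: $M_\pm(z)$ are defined only on $\ran(\Gamma_0^\pm)$ and need not be closed or everywhere defined, so at each step one must verify that the $\xi$ solving the boundary equation actually lies in $\ran(\Gamma_0^+)\cap\ran(\Gamma_0^-)$, and that $\gamma_\pm(z)\xi\in\ker(T_\pm-zI_{\sH_\pm})\subset\dom(T_\pm)$, which is what legitimises $f\in\dom(T)$. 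The auxiliary \emph{some and hence for all} clauses in (iii) should follow from standard Weyl-function/$\gamma$-field identities of the kind recalled in Section~\ref{s2}.
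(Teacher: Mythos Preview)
Your proposal is correct and follows essentially the same route as the paper: Green's identity for (i), the identification $\ker(A-z)\leftrightarrow\ker(M_+(z)+M_-(z))$ via $f_\pm=\gamma_\pm(z)\varphi$ for (ii), and for (iii) and the resolvent formula the ansatz $f_\pm=(A_{0,\pm}-z)^{-1}g_\pm+\gamma_\pm(z)\xi$ leading to the scalar boundary equation $(M_+(z)+M_-(z))\xi=-\gamma_+(\bar z)^*g_+-\gamma_-(\bar z)^*g_-$, whose solvability is equivalent to the range condition. One small slip: in your displayed boundary equation the resolvents should read $(A_{0,\pm}-z)^{-1}$ rather than $(A_{0,\pm}-\overline z)^{-1}$, since $\gamma_\pm(\bar z)^*=\Gamma_1^\pm(A_{0,\pm}-z)^{-1}$ by \eqref{gamstar}; this does not affect the argument.
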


\begin{remark}
One notes that the perturbation term $\gamma(z)\Theta(z)\gamma(\ol{z})^*$ on the
right-hand side of \eqref{r1} can also be written in the form
\begin{equation}
- \begin{pmatrix}  \gamma_+(z) \\ \gamma_-(z)
 \end{pmatrix} (M_+(z)+M_-(z))^{-1}
\bigl(\gamma_+(\ol{z})^*,\,\gamma_-(\ol{z})^*\bigr).
\end{equation} 
\end{remark}

\begin{proof}[Proof of Theorem~\ref{resthm}]
$(i)$ In order to show that $A$ is a symmetric operator in $\sH$  let $f=(f_+,f_-)^\top$ and  $g=(g_+,g_-)^\top$ be in $\dom (A)$.
Making use of the abstract boundary conditions for $f,g\in\dom(A)$, a straightforward computation using the abstract Green identity \eqref{green1} shows that
\begin{align}
 &(Af,g)_\sH-(f,Ag)_\sH  \no \\
 &\quad =(T_+f_+,g_+)_{\sH_+}-(f_+,T_+g_+)_{\sH_+}+(T_-f_-,g_-)_{\sH_-}-(f_-,T_-g_-)_{\sH_-}   \no \\
 &\quad =(\Gamma_1^+f_+,\Gamma_0^+g_+)_{\cH}
 - (\Gamma_0^+f_+,\Gamma_1^+g_+)_{\cH}
 + (\Gamma_1^-f_-,\Gamma_0^-g_-)_{\cH}
 - (\Gamma_0^-f_-,\Gamma_1^-g_-)_{\cH}
 \no \\
 &\quad =(\Gamma_1^+f_++\Gamma_1^-f_-,\Gamma_0^+g_+)_{\cH}
 -(\Gamma_0^+f_+,\Gamma_1^+g_++\Gamma_1^-g_-)_{\cH}    \no \\
 &\quad =0,
\end{align}
hence $A$ is symmetric. \\[2mm]
$(ii)$ Let $z\in\rho(A_0)$ and assume that $z$ is an eigenvalue of $A$. Considering $f\in\ker(A-z I_{\sH})$, $f\not=0$,
one observes that $\Gamma_0f\not=0$ as otherwise $f\in\dom (A_0)$ would be an eigenfunction of $A_0$ at $z$. Clearly $f\in\ker(T-z I_{\sH})$ and hence
$M(z)\Gamma_0 f=\Gamma_1 f$. For $f=(f_+,f_-)^\top$ one then obtains 
\begin{equation}
M_+(z)\Gamma_0^+f_+ =\Gamma^+_1 f_+ \, \text{ and } \, M_-(z) \Gamma_0^- f_-=\Gamma_1^- f_-.
\end{equation}
Since $f\in\dom(A)$ satisfies $\Gamma_0^+f_+=\Gamma_0^-f_-$ and $\Gamma_1^+f_++\Gamma_1^-f_-=0$, one concludes
\begin{equation}
\bigl(M_+(z)+M_-(z)\bigr)\Gamma_0^\pm f_\pm=M_+(z)\Gamma_0^+f_++M_-(z)\Gamma_0^-f_-=\Gamma_1^+f_++\Gamma_1^-f_-=0.
\end{equation}
As a consequence of $\Gamma_0^\pm f_\pm\not=0$, it follows that $\ker(M_+(z)+M_-(z))\not=\{0\}$.

Conversely, assume that $\varphi\in\ker(M_+(z)+M_-(z))$, $\varphi\not=0$, for some $z\in\rho(A_0)$. One notes that
\begin{equation}\label{mm}
M_+(z)\varphi=-M_-(z)\varphi \, \text{ and } \, \varphi\in\ran (\Gamma_0^+)\cap\ran(\Gamma_0^-).
\end{equation}
Hence there exist $f_+\in\ker(T_+-z I_{\sH_+})$ and
$f_-\in\ker(T_- - z I_{\sH_-})$ such that
\begin{equation}\label{bc1}
\Gamma_0^+f_+=\varphi=\Gamma_0^-f_-.
\end{equation}
From the definition of $M_+$ and $M_-$, and \eqref{mm}--\eqref{bc1} one concludes
that
\begin{equation}\label{bc5}
 \Gamma_1^+f_+=M_+(z)\Gamma_0^+f_+=-M_-(z)\Gamma_0^-f_-=-\Gamma_1^-f_-,
\end{equation}
and hence \eqref{bc1} and \eqref{bc5} show that
$f=(f_+,f_-)^\top\in\ker(T-z I_{\sH})$ satisfies both abstract boundary conditions for elements in $\dom(A)$.
Thus, $f\in\ker(A-z I_{\sH})$. \\[2mm]
$(iii)$ First, assume that $A$ is a self-adjoint operator in $\sH$, fix $z\in\dC\backslash\dR$, and let
\begin{equation}\label{rcm1}
 \varphi_+\in\ran\bigl(\Gamma^+_1\upharpoonright\dom (A_{0,+})\bigr).
\end{equation}
Then there exists $f_+\in\sH_+$ such that
\begin{equation}\label{g++}
 \varphi_+=\Gamma^+_1(A_{0,+}- z I_{\sH_+})^{-1}f_+=\gamma_+(\overline{z})^*f_+,
\end{equation}
where the last identity follows from \eqref{gamstar}.
Next, consider $f=(f_+,0)^\top\in\sH$, set
\begin{equation}
 h:=(A- z I_{\sH})^{-1}f-(A_0- z I_{\sH})^{-1} f\in\ker(T- z I_{\sH}), 
\end{equation}
and
\begin{equation}
 k:=(A- z I_{\sH})^{-1}f\in\dom(A).
\end{equation}
Then one has $\Gamma_0 h=\Gamma_0 k$ and $\Gamma_1 h
= \Gamma_1 k - \gamma(\overline{z})^*f$ and hence
\begin{equation}
 \gamma(\overline{z})^*f=\Gamma_1 k-\Gamma_1 h 
 = \Gamma_1 k-M(z)\Gamma_0 h=\Gamma_1 k-M(z)\Gamma_0 k.
\end{equation}
Making use of \eqref{g++} and $f_-=0$ this reads componentwise as
\begin{equation}
 \begin{split}
   \varphi_+=\gamma_+(\overline{z})^*f_+&=\Gamma^+_1 k_+-M_+(z)\Gamma^+_0 k_+,\\
    0=\gamma_-(\overline{z})^*f_-&=\Gamma^-_1 k_--M_-(z)\Gamma^-_0 k_- 
\end{split}
\end{equation}
(cf.\ \eqref{gamgamstar}).
Summing up these two equations and taking into account that  $k\in\dom(A)$ satisfies 
$\Gamma_1^+k_++\Gamma_1^-k_-=0$ and $\Gamma_0^+k_+=\Gamma_0^-k_-$, one finds
\begin{equation}
 \varphi_+=\gamma_+(\overline{z})^*f_+ = - \bigl(M_+(z)+M_-(z) \bigr)\Gamma^+_0 k_+.
\end{equation}
Hence, the inclusion
\begin{equation}
\ran\bigl(\Gamma^+_1\upharpoonright\dom (A_{0,+}) \bigr)
\subseteq \ran\bigl(M_+(z)+M_-(z)\bigr)
\end{equation}
holds for any $z \in\dC\backslash\dR$. In the same way as above one also shows the inclusion
\begin{equation}
\ran\bigl(\Gamma^-_1\upharpoonright\dom (A_{0,-}) \bigr)\subseteq
\ran\bigl(M_+(z)+M_-(z)\bigr).
\end{equation}

Next, we will prove the converse. Assume that
 \begin{equation}
  \ran\bigl(\Gamma^\pm_1\upharpoonright\dom (A_{0,\pm}) \bigr)\subseteq
  \ran\bigl(M_+(z)+M_-(z)\bigr)
 \end{equation}
holds for some $z\in\dC_+$ and some $z\in\dC_-$. We have to prove that the operator $A$ is self-adjoint in $\sH$. Along the way we will also show that the resolvent formula
holds at the point $z$. Note first that $A$ is symmetric by item $(i)$ and hence all eigenvalues of $A$ are real. In particular, $z$ is not an eigenvalue of $A$
and according to item $(ii)$, the operator $M_+(z)+M_-(z)$ is injective. Let $f=(f_+,f_-)^\top\in\sH$ and note that (cf.\ \eqref{gamgamstar})
\begin{equation}
 \gamma(\overline{z})^*f=\begin{pmatrix} \Gamma_1^+(A_{0,+}- z I_{\sH_+})^{-1}f_+ \\
 \Gamma_1^-(A_{0,-}- z I_{\sH_-})^{-1}f_- \end{pmatrix}, 
\end{equation}
and that
\begin{equation}
 \Gamma_1^\pm(A_{0,\pm}- z I_{\sH_{\pm}})^{-1}f_\pm \in
 \ran\bigl(M_+(z)+M_-(z)\bigr) = \dom\big((M_+(z)+M_-(z))^{-1}\big), 
\end{equation}
by assumption. Now consider the element
\begin{equation}
 g:= (A_0- z I_{\sH})^{-1}f - \gamma(z)
 \begin{pmatrix} (M_+(z)+M_-(z))^{-1} & (M_+(z)+M_-(z))^{-1} \\ 
 (M_+(z)+M_-(z))^{-1} & (M_+(z)+M_-(z))^{-1} \end{pmatrix}
\gamma(\overline{z})^*f, 
\end{equation}
which is well-defined by the above considerations and the fact that
\begin{equation}
 \dom (\gamma(z)) = \dom\left(\begin{pmatrix} \gamma_+(z) & 0 \\ 
 0 & \gamma_-(z) \end{pmatrix}\right)=\ran (\Gamma_0^+) \times \ran(\Gamma_0^-), 
\end{equation}
and
\begin{equation}
\ran\big((M_+(z)+M_-(z))^{-1}\big) = \dom \bigl(M_+(z)+M_-(z)\bigr)
=\ran(\Gamma_0^+)\cap\ran(\Gamma_0^-).
\end{equation}
Since $(A_0- z I_{\sH})^{-1}f\in\dom (A_0)\subset \dom (T)$ and
$\ran(\gamma(z))=\ker(T- z I_{\sH})\subset\dom (T)$, it is clear that $g \in \dom (T)$. 
Next, it will be shown that
$g=(g_+,g_-)^\top$ satisfies the boundary conditions
\begin{equation}\label{bc2}
 \Gamma_0^+g_+=\Gamma_0^-g_- \, \text{ and } \, \Gamma_1^+g_+=-\Gamma_1^-g_-.
\end{equation}
Due to
\begin{equation}
 (A_0- z I_{\sH})^{-1}=\begin{pmatrix} (A_{0,+}- z I_{\sH_+})^{-1} & 0 \\
 0 &  (A_{0,-}- z I_{\sH_-})^{-1}\end{pmatrix},
\end{equation}
and the special form of $\gamma(z)$ and $\gamma(\overline{z})^*$, one infers that
\begin{equation}
 g_+= (A_{0,+}- z I_{\sH_+})^{-1}f_+  - \gamma_+(z) \big(M_+(z)+M_-(z)\big)^{-1}
 \big(\gamma_+(\overline{z})^*f_+ + \gamma_-(\overline{z})^*f_-\big),
\end{equation}
and
\begin{equation}
 g_-= (A_{0,-}- z I_{\sH_-})^{-1}f_-  - \gamma_-(z) \big(M_+(z)+M_-(z)\big)^{-1}
 \big(\gamma_+(\overline{z})^*f_+ + \gamma_-(\overline{z})^*f_-\big).
\end{equation}
Since by definition, $A_{0,\pm}=T_\pm\upharpoonright\ker(\Gamma_0^\pm)$ and
$\gamma_\pm(z)=(\Gamma_0^\pm\upharpoonright\ker(T_\pm- z I_{\sH_{\pm}}))^{-1}$, one obtains
\begin{align}
 \Gamma_0^+g_+ &= - \big(M_+(z)+M_-(z)\big)^{-1}
 \big(\gamma_+(\overline{z})^*f_+
 + \gamma_-(\overline{z})^*f_-\big),   \\
 \Gamma_0^-g_- &= - \big(M_+(z)+M_-(z)\big)^{-1}
 \big(\gamma_+(\overline{z})^*f_+
 +\gamma_-(\overline{z})^*f_-\big),
\end{align}
and hence the first condition in \eqref{bc2} is satisfied. Next, we make use of \eqref{gamstar} 
and $M_\pm(z)=\Gamma_1^\pm\gamma_\pm(z)$ and compute
\begin{align}
 \Gamma_1^+g_+ &=\gamma_+(\overline{z})^*f_+ - M_+(z) \big(M_+(z)+M_-(z)\big)^{-1}
                    \big(\gamma_+(\overline{z})^*f_+ + \gamma_-(\overline{z})^*f_-\big),  \\
 \Gamma_1^-g_- &=\gamma_-(\overline{z})^*f_- - M_-(z) \big(M_+(z)+M_-(z)\big)^{-1}
                    \big(\gamma_+(\overline{z})^*f_+ + \gamma_-(\overline{z})^*f_-\big).
\end{align}
It follows that $\Gamma_1^+g_+ + \Gamma_1^-g_-=0$ and hence also the second boundary 
condition in \eqref{bc2} is satisfied. Therefore, $g \in \dom(A)$, and when applying $(A- z I_{\sH})$ 
to $g$ it follows from the particular form of $g$ and $\ran(\gamma(z))\subseteq \ker(T- z I_{\sH})$ that
\begin{equation}
 (A- z I_{\sH}) g=(T- z I_{\sH})(A_0- z I_{\sH})^{-1}f=f.
\end{equation}
Furthermore, as $A$ is symmetric, $z$ is not an eigenvalue of $A$ and one concludes that
\begin{equation}
 (A- z I_{\sH})^{-1}f=g.
\end{equation}
Since $f\in\sH$ was chosen arbitrary it follows that $(A- z I_{\sH})^{-1}$
is an everywhere defined operator in $\sH$.
By our assumptions this
is true for a point $z\in\dC_+$ and for a point $z \in\dC_-$. Hence it follows that $A$ is self-adjoint and that the resolvent of $A$
at the point $z$ has the asserted form, proving assertion $(iii)$.
\\[2mm]
It remains to show that the resolvent of $A$ is of the form as stated in the theorem for all $z \in\rho(A)\cap\rho(A_0)$. For this we remark that
\begin{equation}\label{rcm}
 \gamma_+(\overline{z})^*f_+ + \gamma_-(\overline{z})^*f_-
 \in \ran\bigl(M_+(z)+M_-(z)\bigr)
\end{equation}
holds for all $z\in\rho(A)\cap\rho(A_0)$ and $f=(f_+,f_-)^\top\in\sH$; this follows essentially from
the first part of the proof of item $(iii)$ (which remains valid for points in $\rho(A)\cap\rho(A_0)$).
Based on \eqref{rcm} and the fact that $\ker(M_+(z)+M_-(z))=\{0\}$ for all
$z \in\rho(A)\cap\rho(A_0)$ by assertion $(ii)$, it can be shown in the same way as in the second part of the proof of item $(iii)$ that the resolvent of $A$ has the asserted form. This completes the proof of Theorem~\ref{resthm}.
\end{proof}

The next step is to derive a slightly modified formula for the resolvent of $A$ in Theorem~\ref{resthm}
where the resolvent of $A_0$ is replaced by the resolvent of the operator
\begin{equation}
 \begin{pmatrix} A_{0,+} & 0 \\ 0 & A_{1,-} \end{pmatrix},
\end{equation}
where
\begin{equation}
A_{1,-}=T_-\upharpoonright\ker(\Gamma_1^-)
\end{equation}
is assumed to be  a self-adjoint operator in $\sH_-$. We recall that in the context of quasi boundary triples, the extension $A_{1,-}$ of $S_-$ corresponding to
$\ker(\Gamma_1^-)$ is always symmetric, but generally not self-adjoint.
The resolvent formula in the next theorem is essentially a consequence of Theorem~\ref{resthm} and a formula relating the resolvent
of $A_{0,-}$ with the resolvent of $A_{1,-}$.

\begin{theorem}\label{resthm2}
Let $\{\cH,\Gamma_0^+,\Gamma_1^+\}$ and  $\{\cH,\Gamma_0^-,\Gamma_1^-\}$ be quasi boundary triples for $S_+^*$ and $S_-^*$ with Weyl functions $M_\pm$
as in Theorem~\ref{resthm}.
Assume, in addition, that $A_{1,-}=T_-\upharpoonright\ker(\Gamma_1^-)$ is self-adjoint in $\sH_-$, 
and let $A$ in Theorem~\ref{resthm} be self-adjoint in $\sH$.
Then for all $z \in\rho(A)\cap\rho(A_{0,+})\cap\rho(A_{0,-})\cap\rho(A_{1,-})$ the resolvent of $A$ is given by
\begin{equation}\label{AA+-}
 (A- z I_{\sH})^{-1}=\left(\begin{pmatrix} A_{0,+} & 0 \\ 0 & A_{1,-} \end{pmatrix}
 - z I_{\sH}\right)^{-1}
 +\widehat \gamma(z) \Sigma(z) \widehat \gamma(\overline{z})^*,
\end{equation}
where
\begin{equation}
 \widehat\gamma(z)=\begin{pmatrix} \gamma_+(z) & 0 \\
 0 & \gamma_-(z)M_-(z)^{-1}\end{pmatrix},
 \quad \Sigma(z) =- \begin{pmatrix} M_+(z) & I_{\cH} \\
 I_{\cH} & -M_-(z)^{-1} \end{pmatrix}^{-1}. 
\end{equation}
\end{theorem}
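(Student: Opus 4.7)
The plan is to deduce the formula \eqref{AA+-} from Theorem~\ref{resthm} by trading the resolvent of $A_{0,-}$ for that of $A_{1,-}$ via a single-block Krein-type identity, and then repackaging the resulting two correction terms into $\widehat\gamma(z)\Sigma(z)\widehat\gamma(\overline z)^*$ through a Schur-complement computation.

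\textbf{Step 1.} I first establish the auxiliary resolvent identity
\[
(A_{0,-} - z I_{\sH_-})^{-1} = (A_{1,-} - z I_{\sH_-})^{-1} + \gamma_-(z)M_-(z)^{-1}\gamma_-(\overline z)^*
\]
for $z \in \rho(A_{0,-}) \cap \rho(A_{1,-})$. This is the standard Krein formula for the two self-adjoint extensions of $S_-$ corresponding to $\ker(\Gamma_0^-)$ and $\ker(\Gamma_1^-)$ in the quasi boundary triple framework: write the difference of the two resolvents applied to $f \in \sH_-$ as $\gamma_-(z)\varphi$ for some $\varphi \in \cH$ (since this difference lies in $\ker(T_- - z)$), then act by $\Gamma_1^-$ and use $\Gamma_1^-\gamma_-(z) = M_-(z)$ together with $\Gamma_1^-(A_{0,-} - \overline z\, I_{\sH_-})^{-1} = \gamma_-(\overline z)^*$ from \eqref{gamstar} to solve $\varphi = -M_-(z)^{-1}\gamma_-(\overline z)^* f$. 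Invertibility of $M_-(z)$ at the relevant points follows from item~$(ii)$ of Theorem~\ref{resthm} applied to the triple $\{\cH,\Gamma_0^-,\Gamma_1^-\}$ in combination with the self-adjointness of $A_{1,-}$.

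\textbf{Step 2.} Substituting this identity into the block-diagonal term $(A_0 - z I_{\sH})^{-1}$ on the right-hand side of Theorem~\ref{resthm} gives
\[
(A - z I_{\sH})^{-1} = \left(\begin{pmatrix} A_{0,+} & 0 \\ 0 & A_{1,-}\end{pmatrix} - z I_{\sH}\right)^{-1} + \begin{pmatrix} 0 & 0 \\ 0 & \gamma_-(z)M_-(z)^{-1}\gamma_-(\overline z)^*\end{pmatrix} + \gamma(z)\Theta(z)\gamma(\overline z)^*,
\]
so it remains to verify the purely algebraic identity
\[
\widehat\gamma(z)\Sigma(z)\widehat\gamma(\overline z)^* = \gamma(z)\Theta(z)\gamma(\overline z)^* + \begin{pmatrix} 0 & 0 \\ 0 & \gamma_-(z)M_-(z)^{-1}\gamma_-(\overline z)^*\end{pmatrix}.
\]
For this I would invert the $2{\times}2$ block matrix defining $\Sigma(z)$ by Schur-complementing with respect to the $(2,2)$-entry $-M_-(z)^{-1}$, whose Schur complement is precisely $M_+(z)+M_-(z)$. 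Writing $N := (M_+(z)+M_-(z))^{-1}$, this yields
\[
\Sigma(z) = -\begin{pmatrix} N & N M_-(z) \\ M_-(z) N & -M_-(z) + M_-(z) N M_-(z)\end{pmatrix}.
\]
Direct block multiplication with $\widehat\gamma(z)$ on the left and $\widehat\gamma(\overline z)^* = \mathrm{diag}\bigl(\gamma_+(\overline z)^*,\,M_-(z)^{-1}\gamma_-(\overline z)^*\bigr)$ on the right, using the cancellations $\gamma_-(z)M_-(z)^{-1}M_-(z) = \gamma_-(z)$ and $M_-(z)\cdot M_-(z)^{-1}\gamma_-(\overline z)^* = \gamma_-(\overline z)^*$, reproduces the matrix of $\gamma(z)\Theta(z)\gamma(\overline z)^*$ in three of the four blocks; the only discrepancy, contributed by the $-M_-(z)$ entry of $\Sigma(z)$, is exactly the extra $\gamma_-(z)M_-(z)^{-1}\gamma_-(\overline z)^*$ at the $(2,2)$-position, as required.

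The main obstacle is not the algebra, which is formal, but rather tracking the domains of the unbounded operators $M_\pm(z)$ and $M_-(z)^{-1}$ throughout the Schur-complement manipulation, so that every composition is well-defined on the required subspaces. This reduces, via \eqref{gamstar} and \eqref{gamgamstar}, to verifying that $\gamma_\pm(\overline z)^* f_\pm \in \ran(\Gamma_0^+)\cap\ran(\Gamma_0^-) = \dom(M_+(z)+M_-(z))$ and that $M_-(z)^{-1}\gamma_-(\overline z)^* f_- \in \dom(\gamma_-(z)) = \ran(\Gamma_0^-)$, both of which are already in place from the mapping properties of the $\gamma$-fields and Weyl functions recalled in Section~\ref{s2} and exploited in the second part of the proof of Theorem~\ref{resthm}, under the standing self-adjointness hypotheses on $A$ and $A_{1,-}$.
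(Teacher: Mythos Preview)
Your approach is essentially the same as the paper's: establish the single-block Krein identity $(A_{0,-}-z)^{-1}=(A_{1,-}-z)^{-1}+\gamma_-(z)M_-(z)^{-1}\gamma_-(\overline z)^*$ (the paper cites \cite[Theorem~6.16]{BL12} for this and for the injectivity of $M_-(z)$ on $\rho(A_{0,-})\cap\rho(A_{1,-})$, rather than appealing to Theorem~\ref{resthm}\,$(ii)$, which concerns $M_++M_-$, not $M_-$ alone), substitute it into \eqref{r1}, and reduce to a block-matrix identity---the paper packages this as $\Psi(z)=\Sigma(z)$ while you compute $\Sigma(z)$ directly via the Schur complement, arriving at the same intermediate formula. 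One point the paper makes explicit that you assert without proof is the identification $\widehat\gamma(\overline z)^*=\operatorname{diag}\bigl(\gamma_+(\overline z)^*,\,M_-(z)^{-1}\gamma_-(\overline z)^*\bigr)$, which is justified there via $M_-(z)^{-1}\subset(M_-(\overline z)^*)^{-1}$ and $\ran(\gamma_-(\overline z)^*)\subset\ran(\Gamma_1^-)=\dom(M_-(z)^{-1})$.
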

\begin{proof}
Since $A_{1,-}=T_-\upharpoonright\ker(\Gamma_1^-)$ is self-adjoint in $\sH_-$ it follows from \cite[Theorem~6.16]{BL12}
that $M_-(z)$ is injective for all $z \in\rho(A_{0,-})\cap\rho(A_{1,-})$ and the resolvents of $A_{0,-}$ and $A_{1,-}$ are related via
\begin{equation}\label{res01}
 (A_{0,-}- z I_{\sH_-})^{-1}=(A_{1,-}- z I_{\sH_-})^{-1}+\gamma_-(z)
 M_-(z)^{-1}\gamma_-(\overline{z})^*
\end{equation}
for all $z \in\rho(A_{0,-})\cap\rho(A_{1,-})$.
Making use of \eqref{aaa} and inserting \eqref{res01} in \eqref{r1}--\eqref{thetayes} one obtains
\begin{equation}
 (A- z I_{\sH})^{-1}=\left(\begin{pmatrix} A_{0,+} & 0 \\ 0 & A_{1,-} \end{pmatrix}
 - z I_{\sH}\right)^{-1}
+ \widehat\gamma(z) \Psi(z)\widehat \gamma(\ol{z})^*,
\end{equation}
where
\begin{equation}
\Psi(z)=\begin{pmatrix} I_{\cH} & 0 \\ 0 & M_-(z)\end{pmatrix}
\left[\begin{pmatrix} 0 & 0 \\ 0 &  M_-(z)^{-1}\end{pmatrix}+\Theta(z) \right]
\begin{pmatrix} I_{\cH} & 0 \\ 0 & M_-(z)\end{pmatrix}
\end{equation}
and we have used that
\begin{equation}
 \bigl(\gamma_-(\overline{z})M_-(\overline{z})^{-1}\bigr)^*
 = \bigl(M_-(\overline{z})^{*}\bigr)^{-1}\gamma_-(\overline{z})^*
 =M_-(z)^{-1}\gamma_-(\overline{z})^*.
\end{equation}
Here the first equality holds since $\gamma_-(\overline{z})^*$ is everywhere defined and bounded, and the second equality is valid
since $M_-(z)^{-1}\subset (M_-(\overline{z})^*)^{-1}$ and
\begin{equation}
\ran(\gamma_-(\overline{z})^*)\subset\ran(\Gamma_1)
= \dom \big(M_-(z)^{-1}\big), \quad z \in\rho(A_{0,-})\cap\rho(A_{1,-}). 
\end{equation}
The block operator matrix $\Psi(z)$ in $\cH^2$
has the form
\begin{equation}
\Psi(z)=\begin{pmatrix} - \big(M_+(z) + M_-(z)\big)^{-1} & - \big(M_+(z) + M_-(z)\big)^{-1} M_-(z) \\
- M_-(z) \big(M_+(z) + M_-(z)\big)^{-1} & M_+(z) \big(M_+(z) + M_-(z)\big)^{-1} M_-(z) \end{pmatrix}, 
\end{equation}
and from this representation one infers that $\Psi(z)=\Sigma(z)$ for all
$z \in\rho(A)\cap\rho(A_{0,+})\cap\rho(A_{0,-})\cap\rho(A_{1,-})$.
\end{proof}

In the case where $\{\cH,\Gamma_0^+,\Gamma_1^+\}$ is an ordinary boundary triple, equation \eqref{AA+-} was proved in \cite[eq.~(6.7)]{DHMS09} (cf.\ also \cite{DHMS00}).

\section{The Third Green Identity}  \lb{s4}

This section is devoted to an abstract version of the Third Green identity (cf.\
\eqref{1.18} for the concrete example that motivated these investigations).

We will investigate the operator
\begin{equation}\label{aaaa}
  A= T\upharpoonright \big\{f=(f_+,f_-)^\top\in\dom (T) \, \big| \, \Gamma_0^+f_+=\Gamma_0^-f_-,\,\Gamma_1^+f_+=-\Gamma_1^-f_- \big\},
 \end{equation}
which corresponds to the coupling of the quasi-boundary triples
$\{\cH,\Gamma_0^+,\Gamma_1^+\}$ and $\{\cH,\Gamma_0^-,\Gamma_1^-\}$ for $S_-^*$ and $S_+^*$ in Theorem~\ref{resthm}.
One recalls that $A$ is a symmetric operator in the Hilbert space $\sH=\sH_+\oplus\sH_-$. From now on we shall assume that the following hypothesis is satisfied
(cf.\ Theorem~\ref{resthm}\,$(iii)$).

\begin{hypothesis}\label{h1}
The operator $A$ in \eqref{aaaa} is self-adjoint in the Hilbert space $\sH$.
\end{hypothesis}

In the following let $\sH_2:=\dom (A)$ be the Hilbert space with inner product
\begin{equation}\label{gn}
 (f,g)_{\sH_2}:=(A f,Ag)_\sH + (f,g)_\sH,\quad f,g\in\dom(A),
\end{equation}
and let $\sH_{-2}$ be the adjoint space of distributions on $\sH_2$ with the pairing
denoted by
\begin{equation}
{}_{\sH_{-2}}\langle h ,g\rangle_{\sH_2}, \quad g\in\sH_2, \;  h\in{\sH}_{-2}.
\end{equation}
Since for every $f\in\sH$ the functional $(f, \, \cdot \,)_\sH$ is bounded on $\sH_2$, the space $\sH$ embeds densely into the space $\sH_{-2}$ in such a way that
(cf.\ \cite[Section~1]{Be86})
\begin{equation}
{}_{\sH_{-2}}\langle f,g\rangle_{\sH_2} = (f,g)_\sH, \quad f\in{\sH}, \; g\in\sH_2,
\end{equation}
leading to a Gelfand triple of the form
\begin{equation}\label{gelfand}
\sH_2 \hookrightarrow \sH \hookrightarrow \sH_{-2}.
\end{equation}
Let $\cA$ be the dual operator to $A$ determined by
\begin{equation}\label{eq:cA}
{}_{\sH_{-2}}\langle \cA f,g\rangle_{\sH_2}:=(f,Ag)_{\sH}, \quad f\in\sH, \; g\in\sH_2.
\end{equation}
Since $A \in \cB(\sH_2, \sH)$, also $\cA \in \cB(\sH, \sH_{-2})$.

Next, define the map $\Upsilon$ on $\sH_2=\dom(A)$ as the restriction of 
$(\Gamma_0^+,\Gamma_1^+)^\top$,
\begin{equation}\label{gammagamma}
 \Upsilon:\sH_2\to\cH^2,\quad f\mapsto \Upsilon f= \begin{pmatrix} \Upsilon_0 f \\
 \Upsilon_1 f\end{pmatrix}:=\begin{pmatrix} \Gamma_0^+ f_+\\
 \Gamma_1^+ f_+\end{pmatrix}=\begin{pmatrix} \Gamma_0^- f_-\\
 -\Gamma_1^- f_-\end{pmatrix}.
\end{equation}
Here the last equality follows from the abstract boundary conditions in \eqref{aaaa} for all 
$f \in \dom(A)$.

\begin{lemma}\label{p3.4}
Assume Hypothesis~\ref{h1} and suppose that $A$ in \eqref{aaaa} corresponds to the coupling of
the quasi-boundary triples $\{\cH,\Gamma_0^+,\Gamma_1^+\}$ and
$\{\cH,\Gamma_0^-,\Gamma_1^-\}$ for $S_-^*$ and $S_+^*$, respectively.
Then the operators $\Upsilon_0,\Upsilon_1:\sH_2\to\cH$ in \eqref{gammagamma} are bounded.
\end{lemma}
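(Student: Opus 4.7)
The natural approach is to invoke the closed graph theorem. By Hypothesis~\ref{h1}, $A$ is self-adjoint and hence closed, so $\sH_2=\dom (A)$ equipped with the graph norm \eqref{gn} is a Hilbert space; together with the Hilbert space $\cH^2$ this reduces the lemma to showing that the joint map $\Upsilon=(\Upsilon_0,\Upsilon_1)^\top:\sH_2\to\cH^2$ is closed, since boundedness of $\Upsilon_0$ and $\Upsilon_1$ is then immediate by composing $\Upsilon$ with the coordinate projections of $\cH^2$. I would deliberately work with the joint map rather than its two components separately: the closability assertion recorded in \cite[Proposition~2.2]{BL07} applies to $(\Gamma_0^+,\Gamma_1^+)^\top$ as a pair on $\dom (T_+)$, not automatically to its components individually.

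To verify closedness, take $f^{(n)}\in\dom(A)$ with $f^{(n)}\to f$ in $\sH_2$ and $\Upsilon f^{(n)}\to(\varphi,\psi)^\top$ in $\cH^2$. Since $A\subseteq T$ and $T$ is block-diagonal with respect to $\sH=\sH_+\oplus\sH_-$ (cf.\ \eqref{sts}), convergence in the graph norm of $A$ forces $f_+^{(n)}\to f_+$ in the graph norm of $T_+$. I would then apply closability of $\Gamma^+:=(\Gamma_0^+,\Gamma_1^+)^\top:\dom(T_+)\to\cH^2$ to the difference sequence $f_+^{(n)}-f_+$: its $T_+$-graph norm tends to zero while $\Gamma^+(f_+^{(n)}-f_+)\to(\varphi-\Gamma_0^+f_+,\psi-\Gamma_1^+f_+)^\top$, so the limit must vanish, yielding $\Gamma_0^+f_+=\varphi$ and $\Gamma_1^+f_+=\psi$, i.e., $\Upsilon f=(\varphi,\psi)^\top$.

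Having established that $\Upsilon$ is closed between Hilbert spaces, the closed graph theorem yields $\Upsilon\in\cB(\sH_2,\cH^2)$, from which the claimed boundedness of $\Upsilon_0$ and $\Upsilon_1$ follows. There is no serious technical obstacle; the only point requiring care is the structural one already flagged, namely that one must organize the closedness verification around $\Upsilon$ as a pair in order to exploit the closability of the joint boundary map provided by the quasi boundary triple framework, since free access to individual closability of $\Gamma_0^\pm$ or $\Gamma_1^\pm$ on $\dom(T_\pm)$ is not part of the definition.
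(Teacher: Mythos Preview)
Your proof is correct and follows essentially the same approach as the paper: reduce to closability of $(\Gamma_0^+,\Gamma_1^+)^\top$ on $\dom(T_+)$ via \cite[Proposition~2.2]{BL07}, then apply the closed graph theorem. The only cosmetic difference is that the paper checks closability of $\Upsilon$ (i.e., the case $f^{(n)}\to 0$) and then observes that a closable operator defined on all of $\sH_2$ is automatically closed, whereas you verify closedness directly by passing to the difference sequence $f_+^{(n)}-f_+$; the underlying mechanism is identical.
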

\begin{proof}
We start by showing that the map $\Upsilon$ is closable from $\sH_2$ to $\cH^2$.
Assume that
\begin{equation}
\lim_{n\to\infty} f_n = 0,\quad \lim_{n\to\infty}A f_n = 0,\quad
\lim_{n\to\infty}\Upsilon_0f_n = \varphi,\quad \lim_{n\to\infty} \Upsilon_1f_n = \psi
\end{equation}
for some $\varphi,\psi\in\cH$.
Then by \eqref{gammagamma},
\begin{equation}
\lim_{n\to\infty} f_{n,+} = 0, \quad \lim_{n\to\infty} T_+ f_{n,+} = 0, \quad
\lim_{n\to\infty} \Gamma_0^+ f_{n,+} = \varphi, \quad \lim_{n\to\infty} \Gamma_1^+f_{n,+} = \psi.
\end{equation}
Since the map $(\Gamma_0^+,\Gamma_1^+)^\top:\dom(T)\rightarrow\cH^2$ is closable by \cite[Proposition 2.2]{BL07} one concludes that
$\varphi=\psi=0$ and hence the map $\Upsilon:\sH_2\to\cH^2$
is closable. Since $\dom(\Upsilon)=\sH_2$, it follows that $\Upsilon$ is closed, and the closed graph theorem
implies that $\Upsilon:\sH_2\to\cH^2$ is bounded. Thus, also $\Upsilon_0,\Upsilon_1:\sH_2\to\cH$
are bounded.
\end{proof}

Let $\Upsilon_j^*:\cH\to \sH_{-2}$ be the dual operator to $\Upsilon_j:\sH_2\to\cH$,
$j=0,1$, in \eqref{gammagamma} determined by
\begin{equation}\label{eq:Gamma_pm}
{}_{\sH_{-2}}\langle \Upsilon_j^* \varphi ,g\rangle_{\sH_2}:=(\varphi ,\Upsilon_j g)_{\cH}, \quad \varphi \in\cH, \; g\in\sH_2, \; j=0,1.
\end{equation}
Since $\Upsilon_j$ are bounded operators from $\sH_2$ to $\cH$ by Lemma~\ref{p3.4}, it is clear that
$\Upsilon_j^*$, $j=1,2$, are bounded operators from $\cH$ to $\sH_{-2}$.

Next we introduce an abstract analog of the single and double layer potential (cf.\  \cite{Mc00}). For this it will be assumed that there is an abstract fundamental solution operator for $\cA$.

\begin{hypothesis}\label{h2}
There exists a bounded operator $\cG:\sH_{-2}\to\sH$ such that
\begin{equation}\label{eq:FundSol}
    \cG\cA f=f, \quad f\in\dom(T).
\end{equation}
\end{hypothesis}

\begin{definition}
Assume Hypotheses~\ref{h1} and \ref{h2} and let $\Upsilon_0^*,\Upsilon_1^*:\cH\to \sH_{-2}$ be
defined by \eqref{eq:Gamma_pm}. The abstract {\em single}
 and {\em double layer potentials} are defined by
 \begin{equation}\label{slpot}
 \cS \colon \begin{cases} \cH\rightarrow\sH, \\ 
 \varphi\mapsto \cG\Upsilon_0^* \varphi, \end{cases}
\end{equation}
and
\begin{equation}\label{dlpot}
 \cD \colon \begin{cases} \cH\rightarrow\sH, \\  
 \varphi\mapsto \cG\Upsilon_1^* \varphi, \end{cases} 
\end{equation}
respectively.
\end{definition}

It is clear that the operators $\cS$ and $\cD$
are well-defined and bounded. In order to obtain an abstract third Green identity in the next theorem
we will also use the following notations for the ``jumps'' of boundary values:
\begin{equation}\label{eq:Bracket1}
    [\Gamma_0f]:=\Gamma_0^+f_+-\Gamma_0^-f_-,\quad
    f=(f_+,f_-)^\top \in  \dom (T),
 \end{equation}
 and
 \begin{equation}\label{eq:Bracket2}
    [\Gamma_1f]:=\Gamma_1^+f_++\Gamma_1^-f_-, \quad
    f=(f_+,f_-)^\top \in  \dom (T).
 \end{equation}
One observes that the jump notations in \eqref{eq:Bracket1}--\eqref{eq:Bracket2} are compatible with the boundary conditions for elements in $\dom(A)$;
we note that different signs are used in \eqref{eq:Bracket2}
since in the application in Section~\ref{s5} the operators
$\Gamma^\pm_1$ will be the normal derivatives with the normals having opposite direction.

\begin{theorem} \lb{t3.7}
Assume Hypotheses~\ref{h1} and \ref{h2} and suppose that
$A$ in \eqref{aaaa} corresponds to the coupling of the quasi-boundary triples
$\{\cH,\Gamma_0^+,\Gamma_1^+\}$ and $\{\cH,\Gamma_0^-,\Gamma_1^-\}$ for $S_-^*$ and $S_+^*$, respectively. Let $[\Gamma_0\cdot]$, $[\Gamma_1\cdot]$, $\cS$ and $\cD$ be defined as above.
Then
\begin{equation}\label{eq:u}
f=\cG Tf+\cD[\Gamma_0 f] -\cS[\Gamma_1 f], \quad f \in \dom(T).
\end{equation}
\end{theorem}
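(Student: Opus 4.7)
My plan is to prove the identity by dualizing the abstract Green identity against test elements $g\in\sH_2=\dom(A)$ and then applying the fundamental solution $\cG$ to invert $\cA$.

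First, I would fix $f\in\dom(T)$ and an arbitrary $g=(g_+,g_-)^\top\in\dom(A)$, and apply the Green identity from Definition~\ref{Def-triple} for the coupled quasi boundary triple $\{\cH\oplus\cH,\Gamma_0,\Gamma_1\}$ on the direct sum $T=T_+\oplus T_-$. The crucial step is to rewrite the boundary pairing using the coupling conditions $\Gamma_0^+g_+=\Gamma_0^-g_-=\Upsilon_0 g$ and $\Gamma_1^+g_+=-\Gamma_1^-g_-=\Upsilon_1 g$. Splitting the $\cH\oplus\cH$ scalar products into the two components and collecting terms produces
\begin{equation}
(Tf,g)_\sH-(f,Ag)_\sH=\bigl([\Gamma_1 f],\Upsilon_0 g\bigr)_\cH-\bigl([\Gamma_0 f],\Upsilon_1 g\bigr)_\cH,
\end{equation}
where the jumps $[\Gamma_0 f]$, $[\Gamma_1 f]$ from \eqref{eq:Bracket1}--\eqref{eq:Bracket2} appear naturally because of the opposite signs dictated by $\dom(A)$. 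This rewriting is the one place where care is required; it is a direct calculation but one must keep track of the four boundary terms and use the constraints on $g$ rather than on $f$.

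Next, I would translate this identity from $\sH$-pairings into the $\sH_{-2}$--$\sH_2$ duality using the Gelfand triple \eqref{gelfand}. By \eqref{eq:cA}, $(f,Ag)_\sH={}_{\sH_{-2}}\langle\cA f,g\rangle_{\sH_2}$, and by \eqref{eq:Gamma_pm},
\begin{equation}
\bigl([\Gamma_1 f],\Upsilon_0 g\bigr)_\cH={}_{\sH_{-2}}\langle\Upsilon_0^*[\Gamma_1 f],g\rangle_{\sH_2},\quad \bigl([\Gamma_0 f],\Upsilon_1 g\bigr)_\cH={}_{\sH_{-2}}\langle\Upsilon_1^*[\Gamma_0 f],g\rangle_{\sH_2}.
\end{equation}
Since $Tf\in\sH$, its natural image in $\sH_{-2}$ pairs with $g\in\sH_2$ as the scalar product. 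Because the resulting equality holds for every $g\in\sH_2$, it becomes an identity in $\sH_{-2}$:
\begin{equation}
\cA f=Tf-\Upsilon_0^*[\Gamma_1 f]+\Upsilon_1^*[\Gamma_0 f].
\end{equation}

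Finally, I would apply the fundamental solution operator $\cG\colon\sH_{-2}\to\sH$ provided by Hypothesis~\ref{h2}. Using $\cG\cA f=f$ for $f\in\dom(T)$, linearity of $\cG$, and the definitions \eqref{slpot}--\eqref{dlpot} of $\cS=\cG\Upsilon_0^*$ and $\cD=\cG\Upsilon_1^*$, the previous display transforms into
\begin{equation}
f=\cG Tf-\cS[\Gamma_1 f]+\cD[\Gamma_0 f],
\end{equation}
which is exactly \eqref{eq:u}. The only genuine obstacle is the bookkeeping in the first step: ensuring that the coupling boundary conditions on $g$ convert the four separate boundary terms in the block Green identity into precisely the jumps of $f$ paired against $\Upsilon_0 g$ and $\Upsilon_1 g$, with the correct signs. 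Everything afterwards is a mechanical dualization followed by a single application of $\cG$.
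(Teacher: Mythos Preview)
Your proposal is correct and follows essentially the same approach as the paper: both compute ${}_{\sH_{-2}}\langle\cA f,g\rangle_{\sH_2}=(f,Ag)_\sH$ via the componentwise Green identity, use the coupling conditions on $g\in\dom(A)$ to collapse the four boundary terms into the jumps of $f$ paired against $\Upsilon_0 g$ and $\Upsilon_1 g$, pass to the identity $\cA f=Tf-\Upsilon_0^*[\Gamma_1 f]+\Upsilon_1^*[\Gamma_0 f]$ in $\sH_{-2}$, and then apply $\cG$. The only cosmetic difference is that the paper writes out the two $\pm$ Green identities separately rather than invoking the block triple, but the computation and signs are identical.
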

\begin{proof}
 Let $f=(f_+,f_-)^\top\in\dom (T)$ and $g=(g_+,g_-)^\top\in\dom (A)$. Then it follows from \eqref{eq:cA} and the abstract Green identity \eqref{green1} that
 \begin{equation}\label{jussi2}
 \begin{split}
 {}_{\sH_{-2}}\langle\cA f,g\rangle_{\sH_2}&=(f,Ag)_\sH\\
 &=(f_+,T_+g_+)_{\sH_+}+(f_-,T_-g_-)_{\sH_-} \\
 &=(T_+f_+,g_+)_{\sH_+}-(\Gamma_1^+f_+,\Gamma_0^+g_+)_{\cH}+(\Gamma_0^+f_+,\Gamma_1^+g_+)_{\cH}\\
 &\quad + (T_-f_-,g_-)_{\sH_-}-(\Gamma_1^-f_-,\Gamma_0^-g_-)_{\cH}+(\Gamma_0^-f_-,\Gamma_1^-g_-)_{\cH}.
 \end{split}
 \end{equation}
As $g\in\dom(A)$ one concludes that
\begin{equation}
 \Upsilon_0 g=\Gamma_0^+g_+=\Gamma_0^-g_-, \quad
 \Upsilon_1 g=\Gamma_1^+g_+=-\Gamma_1^-g_-,
\end{equation}
by \eqref{gammagamma}, and hence \eqref{jussi2} takes on the form
 \begin{equation}\label{jussi3}
\begin{split}
{}_{\sH_{-2}}\langle\cA f,g\rangle_{\sH_2}&=(Tf,g)_{\sH}-(\Gamma_1^+f_+
+\Gamma_1^-f_-,\Upsilon_0 g)_{\cH}+(\Gamma_0^+f_+-\Gamma_0^-f_-,\Upsilon_1g)_{\cH}\\
 &=(Tf,g)_{\sH}-([\Gamma_1 f],\Upsilon_0 g)_{\cH}+([\Gamma_0 f],\Upsilon_1g)_{\cH} \\
 &= {}_{\sH_{-2}}\langle Tf,g\rangle_{\sH_2}
 - {}_{\sH_{-2}}\langle \Upsilon_0^* [\Gamma_1 f],g\rangle_{\sH_2}
 + {}_{\sH_{-2}}\langle\Upsilon_1^* [\Gamma_0 f], g\rangle_{\sH_2},
 \end{split}
 \end{equation}
 where \eqref{eq:Bracket1}--\eqref{eq:Bracket2} were used in the second equality, and \eqref{eq:Gamma_pm} was employed in the last equality.
 Since \eqref{jussi3} is true for all $g\in\dom(A)=\sH_2$ one concludes that
 \begin{equation}
  \cA f=Tf - \Upsilon_0^* [\Gamma_1 f] + \Upsilon_1^* [\Gamma_0 f],
 \end{equation}
and making use of the definition of $\cG$ and \eqref{slpot}--\eqref{dlpot} one finally obtains
\begin{equation}
 f=\cG\cA f=\cG Tf - \cG \Upsilon_0^* [\Gamma_1 f] + \cG \Upsilon_1^* [\Gamma_0 f]= \cG Tf-\cS[\Gamma_1 f] + \cD [\Gamma_0 f].
\end{equation}
\end{proof}

The following corollary can be viewed as an abstract unique continuation result.

\begin{corollary}\label{cor:Uniq}
Under the assumptions of Theorem~\ref{t3.7}, the following assertions
$(i)$,\,$(ii)$ hold: \\[1mm]
$(i)$ If $T_+ f_+=0$ for some $f_+\in\dom(T_+)$ and $\Gamma_0^+f_+=\Gamma_1^+f_+=0$ then $f_+=0$. \\[1mm]
$(ii)$ If $T_- f_-=0$ for some $f_-\in\dom(T_-)$ and $\Gamma_0^-f_-=\Gamma_1^-f_-=0$ then $f_-=0$.
\end{corollary}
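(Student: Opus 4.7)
The plan is to apply the abstract third Green identity from Theorem~\ref{t3.7} to a carefully chosen element $f\in\dom(T)$ obtained by padding $f_\pm$ with a zero in the complementary component. I expect the whole argument to be a short direct computation; there is no substantial obstacle, since the linearity of $\Gamma_0^\pm,\Gamma_1^\pm$ makes the zero element of $\dom(T_\mp)$ trivially satisfy all the relevant conditions, and Theorem~\ref{t3.7} then does the work.

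More concretely, for assertion $(i)$, I would set $f:=(f_+,0)^\top$. Since $f_+\in\dom(T_+)$ and $0\in\dom(T_-)$, one has $f\in\dom(T)$ with
\begin{equation}
Tf=\begin{pmatrix}T_+f_+\\ T_-0\end{pmatrix}=\begin{pmatrix}0\\ 0\end{pmatrix}=0,
\end{equation}
by the hypothesis $T_+f_+=0$. Moreover, the jump quantities \eqref{eq:Bracket1}--\eqref{eq:Bracket2} vanish:
\begin{align}
[\Gamma_0 f]&=\Gamma_0^+f_+-\Gamma_0^-\,0=0,\\
[\Gamma_1 f]&=\Gamma_1^+f_++\Gamma_1^-\,0=0,
\end{align}
where the hypotheses $\Gamma_0^+f_+=\Gamma_1^+f_+=0$ are used.

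Plugging these into the abstract third Green identity \eqref{eq:u} from Theorem~\ref{t3.7} then yields
\begin{equation}
f=\cG Tf+\cD[\Gamma_0 f]-\cS[\Gamma_1 f]=\cG\,0+\cD\,0-\cS\,0=0,
\end{equation}
so $f=(f_+,0)^\top=0$ in $\sH=\sH_+\oplus\sH_-$, which forces $f_+=0$ and proves $(i)$. Assertion $(ii)$ follows by the symmetric choice $f:=(0,f_-)^\top\in\dom(T)$ and an identical calculation, using $T_-f_-=0$ together with $\Gamma_0^-f_-=\Gamma_1^-f_-=0$ to conclude that $Tf=0$ and $[\Gamma_0 f]=[\Gamma_1 f]=0$, whence Theorem~\ref{t3.7} again gives $f=0$ and hence $f_-=0$.
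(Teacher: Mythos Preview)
Your proof is correct and essentially identical to the paper's: both set $f_-=0$ (respectively $f_+=0$), observe that $Tf=0$ and both jumps vanish, and then invoke the third Green identity \eqref{eq:u} to conclude $f=0$.
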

\begin{proof}
We prove item $(i)$, the proof of assertion $(ii)$ being analogous. Assume that $T_+ f_+=0$ for some $f_+\in\dom(T_+)$ and $\Gamma_0^+f_+=\Gamma_1^+f_+=0$.
Setting $f_-=0$ one obtains for $f=(f_+ ,f_-)^\top\in\dom(T) $,
\begin{equation}
Tf =0, \quad
[\Gamma_0f]=\Gamma_0^+f_+-\Gamma_0^-f_-=0,\quad
[\Gamma_1f]=\Gamma_1^+f_++\Gamma_1^-f_-=0.
\end{equation}
Hence the third Green identity \eqref{eq:u} implies $f=0$ and therefore $f_+=0$.
\end{proof}

\section{Coupling of Schr\"{o}dinger Operators on Lipschitz Domains on Manifolds}  \lb{s5}

In this section we illustrate the abstract material in Sections \ref{s2}--\ref{s4} with
the concrete case of Schr\"odinger operators on Lipschitz domains on boundaryless Riemannian manifolds,
freely borrowing results from \cite{BGMM16}. For more details and background information concerning
differential geometry and partial differential equations on manifolds the interested reader is referred to
\cite{MMMT16}, \cite{MMT01}, \cite{Ta96}, and the literature cited there.

Suppose $(M,g)$ is a compact, connected, $C^\infty$, boundaryless Riemannian manifold of (real)
dimension $n\in\mathbb{N}$. In local coordinates, the metric tensor $g$ is expressed by
\begin{equation}\label{eqn.aaou}
g=\sum_{j,k=1}^n g_{jk}\,dx_j\otimes dx_k.
\end{equation}
As is customary, we shall use the symbol $g$ to also abbreviate
\begin{equation}\label{eqn.aaoik}
g:=\det\big[(g_{jk})_{1\leq j,k\leq n}\big],
\end{equation}
and we shall use $(g^{jk})_{1\leq j,k\leq n}$ to denote the inverse of
the matrix $(g_{jk})_{1\leq j,k\leq n}$, that is,
\begin{equation}\label{eqn.altou}
(g^{jk})_{1\leq j,k\leq n}:=\big[(g_{jk})_{1\leq j,k\leq n}\big]^{-1}.
\end{equation}
The volume element $d{\mathcal V}_{\!g}$ on $M$ with respect to the Riemannian
metric $g$ in \eqref{eqn.aaou} then can be written in local coordinates as
\begin{equation}\label{eqn.aa-pp}
d{\mathcal V}_{\!g}(x)=\sqrt{g(x)}\,d^n x.
\end{equation}

Following a common practice, we use $\{\partial_j\}_{1\leq j\leq n}$ to denote a local basis
in the tangent bundle $TM$ of the manifold $M$. This implies that if $X,Y\in TM$ are locally
expressed as $X=\sum_{j=1}^n X_j\partial_j$ and $Y=\sum_{j=1}^n Y_j\partial_j$, then
\begin{equation}\label{eqn.aa-phREED}
\langle X,Y\rangle_{TM}=\sum_{j,k=1}^n X_jY_k g_{jk},
\end{equation}
where $\langle\cdot,\cdot\rangle_{TM}$ stands for the pointwise inner product in $TM$.

Next, we discuss the gradient and divergence operators associated with the metric $g$ on the manifold $M$.
Specifically, given an open set $\Omega\subset M$ and some function $f\in C^1(\Omega)$,
the gradient of $f$ is the vector field locally defined as
\begin{equation}\label{rdf96-2DC.1RR}
{\rm grad}_g (f):=\sum_{j,k=1}^n (\partial_jf)g^{jk}\partial_k.
\end{equation}
Also, given any vector field $X\in C^1(\Omega,TM)$ locally written as $X=\sum_{j=1}^n X_j\partial_j$,
its divergence is given by
\begin{eqnarray}\label{gMM-24GBN}
{\rm div}_g (X):=\sum_{j=1}^n g^{-1/2}\partial_j(g^{1/2}X_j)
=\sum_{j=1}^n\partial_jX_j+\sum_{j,k=1}^n\Gamma^j_{jk}X_k,
\end{eqnarray}
where $\Gamma^i_{jk}$ are the Christoffel symbols associated with the metric \eqref{eqn.aaou}. The Laplace--Beltrami operator
\begin{equation}\label{eqn.thu-TELL}
\Delta_g:={\rm div}_g\,{\rm grad}_g,
\end{equation}
is expressed locally as
\begin{equation}\label{eqn.thuaou}
\Delta_gu=\sum_{j,k=1}^n g^{-1/2}\partial_j\big(g^{jk}g^{1/2}\partial_ku\big).
\end{equation}
We are interested in working with the Schr\"odinger operator
\begin{equation}\label{eqn.lmnae}
L:=-\Delta_g+V,
\end{equation}
where the potential $V\in L^\infty(M)$ is a real scalar-valued function.

The reader is reminded that the scale of $L^2$-based Sobolev spaces $H^s(M)$ of fractional smoothness $s\in{\mathbb{R}}$ on $M$ may be defined in a natural fashion, via localization (using a smooth partition of unity subordinate to a finite cover of $M$ with local coordinate charts) and pull-pack to the Euclidean model. This scale of spaces is then adapted to an open subset $\Omega$ of $M$ via restriction, by setting
\begin{equation}\label{u64rLL-1}
H^s(\Omega):=\big\{u\big|_{\Omega} \,\big|\, u\in H^s(M)\big\},\quad s\in{\mathbb{R}}.
\end{equation}
In particular, $H^0(\Omega)$ coincides with $L^2(\Omega)$, the space of square-integrable functions
with respect to the volume element $d{\mathcal V}_{\!g}$ in $\Omega$. For each
$s\in{\mathbb{R}}$ we also define
\begin{equation}\label{tr6ytG-MMM}
\mathring{H}^s(\Omega):=\overline{C^\infty_0(\Omega)}^{H^s(\Omega)},
\end{equation}
and equip the latter space with the norm inherited from $H^s(\Omega)$.

Since bounded Lipschitz domains in the Euclidean setting are invariant under $C^1$
diffeomorphisms (cf.\ \cite{HMT07}), this class may canonically be defined on the
manifold $M$, using local coordinate charts. Given a Lipschitz domain $\Omega$, it is then possible to define (again, in a canonical manner, via localization and pull-back) fractional Sobolev spaces on its boundary, $H^s(\partial\Omega)$, for $s\in[-1,1]$. In such a scenario one has
\begin{equation}
\big(H^s(\partial\Omega)\big)^\ast=H^{-s}(\partial\Omega), \quad s\in[-1,1],
\end{equation}
and $H^0(\partial\Omega)$ coincides with $L^2(\partial\Omega)$, the space of square-integrable functions with respect to the surface measure $\sigma_{\!g}$ induced by the ambient Riemannian metric on $\partial\Omega$. Moreover,
\begin{equation}\label{sop-uGG}
\big\{\phi\big|_{\partial\Omega} \, \big| \,\phi\in C^\infty(M)\big\}\,\text{ is dense in each }\,
H^s(\partial\Omega),\quad s\in[-1,1],
\end{equation}
and
\begin{equation}\label{sop-uGG.2}
H^{s_0}(\partial\Omega)\hookrightarrow H^{s_1}(\partial\Omega)\,\text{ continuously, whenever }\,
-1\leq s_1\leq s_0\leq 1.
\end{equation}

In the following the operator $A$ in Sections \ref{s3} and \ref{s4} will be
the Schr\"{o}dinger operator
\begin{equation}\label{sop}
A:=-\Delta_g +V, \quad \dom(A):=H^2(M).
\end{equation}
To proceed, we fix a Lipschitz domain $\Omega_{+}\subset M$ and denote
\begin{equation}\label{i6RR}
\Omega_{-}:=M\backslash\overline{\Omega_{+}}.
\end{equation}
Then $\Omega_{-}$ is also a Lipschitz domain, sharing a common compact boundary with $\Omega_{+}$,
\begin{equation}\label{u64dddf}
\cC:=\partial\Omega_{+}=\partial\Omega_{-}.
\end{equation}
At the global level, it is important to note that $A$ is selfadjoint in the Hilbert
space $\sH=L^2(M)$. The decomposition of $M\backslash\cC$ into the disjoint
union of $\Omega_{+}$ and $\Omega_{-}$, induces a direct orthogonal sum  decomposition of $\sH$ into two Hilbert spaces $\sH_+$ and $\sH_-$, defined as $\sH_\pm:=L^2(\Omega_\pm)$,
\begin{equation}
\sH = L^2(\Omega_+) \oplus L^2(\Omega_-).
\end{equation}
In the following functions on $M$ will be identified with
the pair of restrictions onto $\Omega_+$ and $\Omega_-$ and a vector notation will be used. For example, for $f\in L^2(M)$ we shall also write $(f_+,f_-)^\top$, where
$f_\pm\in L^2(\Omega_\pm)$. This notation is in accordance with the
notation in Sections~\ref{s3} and \ref{s4}. Also, in the sequel we agree to abbreviate
\begin{equation}\label{u64dddf-y5r}
V_{\pm}:=V\big|_{\Omega_\pm}\in L^\infty(\Omega_\pm).
\end{equation}

For $s\geq 0$ we define the Banach spaces
\begin{equation}\label{y555}
H^s_\Delta(\Omega_\pm):=\big\{f_\pm\in H^s(\Omega_\pm) \, \big| \,\Delta_g f_\pm\in L^2(\Omega_\pm)\big\},
\end{equation}
equipped with the norms $\|\cdot\|_{H^s_\Delta(\Omega_\pm)}$ defined as
\begin{equation}\label{65rtf}
\|f_\pm\|_{H^s_\Delta(\Omega_\pm)}:=\|f_\pm\|_{H^s(\Omega_\pm)}
+\big\|(-\Delta_g+V_\pm)f_\pm\big\|_{L^2(\Omega_\pm)}, \quad
f_\pm\in H^s_\Delta(\Omega_\pm).
\end{equation}
The minimal and maximal realizations of $-\Delta_g+V_\pm$ in $L^2(\Omega_\pm)$ are defined by
\begin{equation}\label{y55rr-34}
S_{min,\pm}:=-\Delta_g+V_{\pm},\quad\dom(S_{min,\pm})=\mathring{H}^2(\Omega_\pm),
\end{equation}
and
\begin{equation}\label{75rrff}
S_{max,\pm}:=-\Delta_g+V_{\pm},\quad\dom(S_{max,\pm})=H^0_\Delta(\Omega_\pm).
\end{equation}

In the next lemma we collect some well-known properties of the operators
$S_{min,\pm}$ and $S_{max,\pm}$. A proof of this lemma and some further
properties of the minimal and maximal realization of $-\Delta_g+V_\pm$ can
be found, for instance, in \cite{BGMM16}.

\begin{lemma}\label{LLahvfv}
The operators $S_{min,\pm}$ and $S_{max,\pm}$ are densely defined and closed in
$L^2(\Omega_\pm)$. The operator $S_{min,\pm}$ is symmetric, semibounded from below,
and has infinite deficiency indices. Furthermore, $S_{min,\pm}$ and $S_{max,\pm}$
are adjoints of each other, that is,
\begin{equation}\label{7544rr}
\big(S_{min,\pm}\big)^*=S_{max,\pm}\,\text{ and } \,S_{min,\pm}=\big(S_{max,\pm}\big)^*.
\end{equation}
\end{lemma}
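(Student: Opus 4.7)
The plan is to verify the four assertions in sequence, drawing on standard elliptic theory with the Lipschitz-domain ingredients imported from \cite{BGMM16}. First I would dispose of denseness and closedness. Density of $\dom(S_{min,\pm})=\mathring H^2(\Omega_\pm)$ in $L^2(\Omega_\pm)$ is immediate from $C^\infty_0(\Omega_\pm)\subseteq\mathring H^2(\Omega_\pm)$, and since $\mathring H^2(\Omega_\pm)\subseteq H^0_\Delta(\Omega_\pm)$, density of $\dom(S_{max,\pm})$ follows. Closedness of $S_{max,\pm}$ is the standard distributional argument: if $f_n\to f$ in $L^2(\Omega_\pm)$ and $(-\Delta_g+V_\pm)f_n\to h$ in $L^2(\Omega_\pm)$, boundedness of $V_\pm$ forces $\Delta_g f_n$ to be Cauchy in $L^2(\Omega_\pm)$, and passing to the limit in the sense of distributions identifies $\Delta_g f$ with an $L^2(\Omega_\pm)$ function, so $f\in H^0_\Delta(\Omega_\pm)$ with $S_{max,\pm}f=h$. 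Closedness of $S_{min,\pm}$ then follows because $\mathring H^2(\Omega_\pm)$ is by definition closed in $H^2(\Omega_\pm)$ and the graph norm $\|\cdot\|_{L^2}+\|(-\Delta_g+V_\pm)\cdot\|_{L^2}$ is dominated by $\|\cdot\|_{H^2}$ on this subspace, using $V_\pm\in L^\infty(\Omega_\pm)$.

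Next I would treat symmetry, semiboundedness from below, and the two adjoint identities together via Green's formula. For $u,v\in C^\infty_0(\Omega_\pm)$ two integrations by parts give the manifestly symmetric expression $((-\Delta_g+V_\pm)u,v)_{L^2}=(\mathrm{grad}_g u,\mathrm{grad}_g v)_{L^2}+(V_\pm u,v)_{L^2}$, which extends by continuity to $\mathring H^2(\Omega_\pm)$ and yields both symmetry of $S_{min,\pm}$ and the lower bound $S_{min,\pm}\ge -\|V_\pm\|_{L^\infty}I$. The inclusion $S_{max,\pm}\subseteq (S_{min,\pm})^*$ is the same Green identity with $g\in H^0_\Delta(\Omega_\pm)$ in place of $v$: for $u\in C^\infty_0(\Omega_\pm)$ it is merely the distributional definition of $\Delta_g g$ tested against $u$, and it extends to $\mathring H^2(\Omega_\pm)$ by density. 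Conversely, any $g\in\dom((S_{min,\pm})^*)$ with $(S_{min,\pm})^*g=h$ satisfies $(-\Delta_g+V_\pm)g=h$ in the sense of distributions on $\Omega_\pm$, hence $g\in H^0_\Delta(\Omega_\pm)=\dom(S_{max,\pm})$. The dual identity $(S_{max,\pm})^*=S_{min,\pm}$ then follows by passing to adjoints and invoking $(S_{min,\pm})^{**}=S_{min,\pm}$, which is valid since $S_{min,\pm}$ is densely defined and closed.

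The step I expect to be the main obstacle is the claim of infinite deficiency indices. Using $(S_{min,\pm})^*=S_{max,\pm}$ one has $n_\pm(S_{min,\pm})=\dim\ker(-\Delta_g+V_\pm\mp iI)$ computed inside $H^0_\Delta(\Omega_\pm)$. In the smooth setting this would be classical, but on a Lipschitz domain in a manifold it genuinely requires the sharp trace and solvability theory for $-\Delta_g+V_\pm$ on Lipschitz domains developed in \cite{BGMM16} (see also \cite{HMT07}, \cite{MMT01}): for any Dirichlet datum $\varphi$ in the infinite-dimensional boundary space $H^{-1/2}(\cC)$, the inhomogeneous problem $(-\Delta_g+V_\pm\mp iI)f=0$ in $\Omega_\pm$ with prescribed Dirichlet trace $\varphi$ on $\cC$ is uniquely solvable in $H^0_\Delta(\Omega_\pm)$. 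This yields an infinite-dimensional parametrization of $\ker(S_{max,\pm}\mp iI)$ and forces $n_\pm(S_{min,\pm})=\infty$, completing the proof.
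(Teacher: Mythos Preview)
The paper itself does not prove this lemma; it simply refers to \cite{BGMM16}. Your sketch therefore already supplies more than the paper does, and the overall strategy---density via $C^\infty_0$, closedness of $S_{max,\pm}$ by the distributional limit argument, symmetry and semiboundedness via Green's formula, the adjoint identities through the weak formulation, and infinite deficiency indices via unique solvability of the Dirichlet problem with data in the infinite-dimensional space $H^{-1/2}(\cC)$---is the standard and correct one.

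There is, however, one genuine gap. Your justification for the closedness of $S_{min,\pm}$ is stated backwards: knowing that the graph norm is \emph{dominated by} the $H^2$-norm on $\mathring H^2(\Omega_\pm)$ tells you nothing about graph-norm limits landing in $\mathring H^2(\Omega_\pm)$. What is actually needed is the reverse inequality, the elliptic a~priori estimate
\[
\|f\|_{H^2(\Omega_\pm)}\le C\bigl(\|f\|_{L^2(\Omega_\pm)}+\|\Delta_g f\|_{L^2(\Omega_\pm)}\bigr),
\qquad f\in\mathring H^2(\Omega_\pm),
\]
which one obtains by extending $f$ by zero to $M$ and invoking the $H^2$-boundedness of $(-\Delta_g+1)^{-1}$ on the closed manifold (or by localizing to charts and using the Fourier transform). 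With this estimate in hand the graph norm and the $H^2$-norm are equivalent on $\mathring H^2(\Omega_\pm)$, and closedness of $S_{min,\pm}$ then follows from the fact that $\mathring H^2(\Omega_\pm)$ is closed in $H^2(\Omega_\pm)$. This is not merely cosmetic: your later deduction of $(S_{max,\pm})^*=S_{min,\pm}$ via $(S_{min,\pm})^{**}=S_{min,\pm}$ rests precisely on $S_{min,\pm}$ being closed.
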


Let $\mathfrak{n}^\pm\in L^\infty(\cC,TM)$ be the outward unit normal vectors to $\Omega_\pm$.
One observes that in the present situation $\mathfrak{n}^+=-\mathfrak{n}^-$.
The Dirichlet and Neumann trace operators $\tau^\pm_D$ and $\tau^\pm_N$, originally defined by
\begin{equation}\label{uy55rff-54}
\tau^\pm_D f_\pm:=f_\pm\upharpoonright_{\cC},\quad
\tau^\pm_N f_\pm:=\big\langle\mathfrak{n}^\pm,{\rm grad}_g f_\pm\upharpoonright_{\cC}\big\rangle_{TM}
\end{equation}
for $f_\pm\in C^\infty(\overline{\Omega_\pm})$, admit continuous linear extensions to operators
\begin{equation}\label{trace}
\tau^\pm_D :H^s_\Delta(\Omega_\pm)\to H^{s-1/2}(\cC) \, \text{ and } \,
\tau^\pm_N : H^s_\Delta(\Omega_\pm)\to H^{s-3/2}(\cC), 
\end{equation}
whose actions are compatible with one another, for all $s\in[\tfrac{1}{2},\tfrac{3}{2}]$.
We refer to \cite{BGMM16} where it is also shown that
\begin{align}\label{tra-trf}
& \text{the trace operators $\tau^\pm_D$,\, $\tau^\pm_N$ in \eqref{trace}} \\
& \quad \text{ are both surjective for each $s\in[\tfrac{1}{2},\tfrac{3}{2}]$.}
\end{align}
We wish to augment \eqref{tra-trf} with the following density result.

\begin{lemma}\label{ytFVVa-LL}
The ranges of the mappings
\begin{align}\label{Utrafr-1}
\big\{f_{\pm}\in H^{3/2}(\Omega_{\pm}) \, \big| \, 
\Delta_g f_\pm\in C^\infty(\,\overline{\Omega_{\pm}}\,)\big\}
\ni f_\pm\mapsto\big(\tau^\pm_D f_\pm,-\tau^\pm_N f_\pm\big)
\end{align}
are dense in $L^2(\cC)\times L^2(\cC)$.
\end{lemma}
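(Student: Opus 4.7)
The argument proceeds in two stages. In the first stage I establish density of the joint range of the Dirichlet--Neumann trace pair on the larger space $H^{3/2}_\Delta(\Omega_\pm)$; in the second I show that every element of that range is in turn approximated from within the smaller subset where $\Delta_g f_\pm\in C^\infty(\overline{\Omega_\pm})$, by using a Dirichlet-solve to smooth out the Laplacian while preserving the Dirichlet trace.

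For the first stage I claim that
\[
\bigl\{\bigl(\tau^\pm_D f_\pm,\,-\tau^\pm_N f_\pm\bigr)\,:\,f_\pm\in H^{3/2}_\Delta(\Omega_\pm)\bigr\}
\]
is dense in $L^2(\cC)\times L^2(\cC)$. Given $(\varphi,\psi)\in L^2(\cC)\times L^2(\cC)$ and $\epsilon>0$, pick $\varphi_\epsilon\in H^1(\cC)$ with $\|\varphi-\varphi_\epsilon\|_{L^2(\cC)}<\epsilon$ (using density of $C^\infty(\cC)\subseteq H^1(\cC)$ in $L^2(\cC)$ from \eqref{sop-uGG}), and using the surjectivity $\tau^\pm_D:H^{3/2}_\Delta(\Omega_\pm)\twoheadrightarrow H^1(\cC)$ from \eqref{tra-trf} lift it to $u_\epsilon\in H^{3/2}_\Delta(\Omega_\pm)$ with $\tau^\pm_D u_\epsilon=\varphi_\epsilon$. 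Setting $\eta_\epsilon:=\tau^\pm_N u_\epsilon\in L^2(\cC)$, choose $v_\epsilon\in\ker\tau^\pm_D\cap H^{3/2}_\Delta(\Omega_\pm)$ with $\|\tau^\pm_N v_\epsilon-(-\psi-\eta_\epsilon)\|_{L^2(\cC)}<\epsilon$, exploiting density of $\tau^\pm_N(\ker\tau^\pm_D\cap H^{3/2}_\Delta(\Omega_\pm))$ in $L^2(\cC)$ (a classical Lipschitz-domain fact underlying the framework of \cite{BGMM16}). Then $f_\epsilon:=u_\epsilon+v_\epsilon$ has $\tau^\pm_D f_\epsilon=\varphi_\epsilon$ and $-\tau^\pm_N f_\epsilon$ within $\epsilon$ of $\psi$ in $L^2(\cC)$.

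For the second stage, fix $f\in H^{3/2}_\Delta(\Omega_\pm)$, set $h:=\Delta_g f\in L^2(\Omega_\pm)$, and pick $h_n\in C^\infty(\overline{\Omega_\pm})$ with $h_n\to h$ in $L^2(\Omega_\pm)$. Solve the Dirichlet problem
\[
\Delta_g z_n=h-h_n\text{ in }\Omega_\pm,\quad \tau^\pm_D z_n=0\text{ on }\cC.
\]
By the sharp Jerison--Kenig type regularity for the Laplace--Beltrami Dirichlet problem on Lipschitz domains (developed in the Riemannian setting in \cite{BGMM16}, \cite{MMMT16}), the solution map is bounded from $L^2(\Omega_\pm)$ into $H^{3/2}(\Omega_\pm)\cap\ker\tau^\pm_D$ with $L^2(\cC)$-control on the Neumann trace, so $\tau^\pm_N z_n\to 0$ in $L^2(\cC)$. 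Setting $f_n:=f-z_n$ gives $f_n\in H^{3/2}(\Omega_\pm)$, $\Delta_g f_n=h_n\in C^\infty(\overline{\Omega_\pm})$, $\tau^\pm_D f_n=\tau^\pm_D f$, and $\tau^\pm_N f_n\to\tau^\pm_N f$ in $L^2(\cC)$. Combining the two stages yields the claim.

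The main obstacle is analytic: both the density of $\tau^\pm_N(\ker\tau^\pm_D\cap H^{3/2}_\Delta(\Omega_\pm))$ in $L^2(\cC)$ used in stage one, and the Lipschitz-domain $H^{3/2}$-regularity with $L^2$-control of the Neumann trace used in stage two, are nontrivial consequences of Dirichlet regularity theory on Lipschitz domains. These are precisely the inputs that underpin the manifold trace calculus of \cite{BGMM16} invoked throughout this section, so they can be taken as standard here.
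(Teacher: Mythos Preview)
Your two-stage constructive approach is logically sound, but it takes a quite different route from the paper and pushes the essential difficulty into a black box. The paper argues by duality: it assumes $(h_D,h_N)\in L^2(\cC)^2$ annihilates the range, then tests the orthogonality relation first against single layer potentials $f_\pm=\cS_0^\pm h$ (built from auxiliary fundamental solutions $E_0^\pm$) and second against volume potentials $f_\pm=\cG_0^\pm\phi$ with $\phi\in C^\infty_0(\Omega_\pm)$. The jump/trace relations for layer potentials yield two boundary identities, $S_0^\pm h_D-(-\tfrac12 I+K_0^\pm)h_N=0$ and $S_0^\pm h_D-(\tfrac12 I+K_0^\pm)h_N=0$; subtracting gives $h_N=0$, and then injectivity of the boundary single layer $S_0^\pm$ forces $h_D=0$. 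Note that both families of test functions automatically satisfy $\Delta_g f_\pm\in C^\infty(\overline{\Omega_\pm})$, so no separate smoothing step is needed.

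The main reservation about your argument is Stage~1: the fact you label ``classical'', namely that $\tau^\pm_N\bigl(\ker\tau^\pm_D\cap H^{3/2}_\Delta(\Omega_\pm)\bigr)$ is dense in $L^2(\cC)$, is true but not lightweight. Its standard proof goes through exactly the layer-potential apparatus the paper deploys (it is essentially dual to injectivity of the single layer operator on $L^2(\cC)$), so you are not avoiding the core analytic input, only relocating it. There is also a circularity hazard: Lemma~\ref{ytFVVa-LL} is what establishes condition~(ii) of the quasi boundary triple in Theorem~\ref{qbtlip}, so you must be sure that the version of this density fact you import from \cite{BGMM16} is proved there by direct PDE means and not via the boundary-triple framework. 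Your Stage~2 (smoothing $\Delta_g f$ by subtracting a Dirichlet solve and invoking the sharp $H^{3/2}$ regularity) is correct and clean, but the paper's approach sidesteps it entirely. In summary: your outline is valid provided the Stage~1 density input is independently secured, but the paper's layer-potential duality argument is more self-contained and makes the needed ingredients explicit.
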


The proof of Lemma~\ref{ytFVVa-LL} requires some preparations. To get started, we fix
two potentials $0\leq V^\pm_0\in C^\infty(M)$ which are not identically zero on $M$,
and which vanish on $\Omega_{+}$, and on $\Omega_{-}$, respectively.
Then (cf. \cite[p.~27]{MT00b}) for each $s\in[0,2]$, the operators
\begin{align}\label{khaba-GGG-XXX}
& -\Delta_g+V^\pm_0:H^{2-s}(M)\longrightarrow H^{-s}(M) \\
& \quad \, \text{ are invertible, with bounded inverses.}
\end{align}
Furthermore, for each choice of sign, the said inverses act in a compatible fashion with one another.
Abbreviating $\cG^\pm_0:=(-\Delta_g+V^\pm_0)^{-1}$ then yields two well-defined, linear,
and bounded operators
\begin{equation}\label{khaba-GGG-XXX.2}
\cG^\pm_0:H^{-s}(M)\longrightarrow H^{2-s}(M) \,\text{ for each } s\in[0,2].
\end{equation}
The Schwartz kernels of these operators are distributions $E^\pm_0$ on $M\times M$ which are smooth
outside of the diagonal ${\rm diag}M:=\{(x,x):\,x\in M\}$. In particular, it makes sense
to talk about pointwise values $E^\pm_0(x,y)$ for $x,y\in M$ with $x\not= y$. Among other things,
the functions $E^\pm_0(\cdot,\cdot)\in C^\infty\big((M\times M)\backslash{\rm diag}M\big)$ satisfy
\begin{equation}\label{ju6gfc}
E^\pm_0(x,y)=E^\pm_0(y,x)\,\,\text{ for all }\,\,x,y\in M\,\,\text{ with }\,\,x\not=y.
\end{equation}

At this stage, we bring in the single and double layer potentials on Lipschitz domains on manifolds
considered in \cite{MT99}--\cite{MT00b}. Their actions on an arbitrary function $\varphi\in L^2(\cC)$
are, respectively,
\begin{equation}\label{slpot2-XXX-YYa}
(\cS^{\pm}_0\varphi)(x):=\int_{\cC}E^\pm_0(x,y)\varphi(y)\,d\sigma_{\!g}(y),\quad x\in\Omega_{\pm},
\end{equation}
and
\begin{equation}\label{dlpot2-XXX-YYa}
(\cD^{\pm}_0\varphi)(x):=\pm\int_{\cC}\big\langle\mathfrak{n}^{\pm}(y),{\rm grad}_{g_y}[E^\pm_0(x,y)]\big\rangle_{T_yM}
\varphi(y)\,d\sigma_{\!g}(y),\quad x\in\Omega_\pm,
\end{equation}
where $\sigma_{\!g}$ is the surface measure induced by the ambient Riemannian metric on $\cC$.
Let us also consider their boundary versions, that is, the singular integral operators acting
on an arbitrary function $\varphi\in L^2(\cC)$ according to
\begin{equation}\label{slpot2-XXX-YYa.2}
(S^\pm_0\varphi)(x):=\int_{\cC}E^\pm_0(x,y)\varphi(y)\,d\sigma_{\!g}(y),\quad x\in\cC,
\end{equation}
and
\begin{equation}\label{dlpot2-XXX-YYa.2}
(K^\pm_0\varphi)(x):={\rm P.V.}\int_{\cC}
\big\langle\mathfrak{n}^{\pm}(y),{\rm grad}_{g_y}[E^\pm_0(x,y)]\big\rangle_{T_yM}
\varphi(y)\,d\sigma_{\!g}(y),\quad x\in\cC,
\end{equation}
where ${\rm P.V.}$ indicates that the integral is considered in the principal value sense
(i.e., removing a small geodesic ball centered at the singularity and passing to the limit
as its radius shrinks to zero). Work in \cite{MT99}--\cite{MT00b} ensures that the following
properties hold:
\begin{align}\label{PRop-AA.6}
& \cD^{\pm}_0:L^2(\cC)\to H^{1/2}_\Delta(\Omega_{\pm})\,\,\text{ are linear and bounded operators},
\\[4pt]
& (-\Delta_g+V^\pm_0)(\cD^{\pm}_0\varphi)=0\,\,\text{ in $\Omega_\pm$, for every function }\,\,\varphi\in L^2(\cC),
\label{PRop-AA.7}
\\[4pt]
& K^\pm_0:L^2(\cC)\to L^2(\cC)\,\,\text{ are linear and bounded operators},
\label{PRop-AA.8}
\\[4pt]
& \tau_D^\pm(\cD^{\pm}_0\varphi)=\pm\big(\tfrac{1}{2}I+K_0^\pm\big)\varphi\,\,
\text{ on $\cC$, for each }\,\,\varphi\in L^2(\cC),
\label{PRop-AA.9}
\\[4pt]
& \cS^{\pm}_0:L^2(\cC)\to H^{3/2}_\Delta(\Omega_{\pm})\,\,\text{ are linear and bounded operators},
\label{PRop-AA.1}
\\[4pt]
& (-\Delta_g+V^\pm_0)(\cS^{\pm}_0\varphi)=0\,\,\text{ in $\Omega_\pm$, for every function }\,\,\varphi\in L^2(\cC),
\label{PRop-AA.2}
\\[4pt]
& S^\pm_0:L^2(\cC)\to L^2(\cC)\,\,\text{ are linear, bounded, self-adjoint, and injective},
\label{PRop-AA.3}
\\[4pt]
& \tau_D^\pm(\cS^{\pm}_0\varphi)=S^\pm_0\varphi\,\,\text{ on $\cC$, for each function }\,\,\varphi\in L^2(\cC),
\label{PRop-AA.4}
\\[4pt]
& \tau_N^\pm(\cS^{\pm}_0\varphi)=\big(-\tfrac{1}{2}I+(K_0^\pm)^\ast\big)\varphi\,\,
\text{ on $\cC$, for every }\,\,\varphi\in L^2(\cC),
\label{PRop-AA.5}
\end{align}
where $I$ is the identity operator on $L^2(\cC)$, and $(K_0^\pm)^\ast$ are the adjoints of the
operators $K_0^\pm$ in \eqref{PRop-AA.8}. After this preamble, we are ready to
present the proof of Lemma~\ref{ytFVVa-LL}.

\vskip 0.08in
\begin{proof}[Proof of Lemma~\ref{ytFVVa-LL}]
The density results claimed in the statement follow
as soon as we establish that whenever two functions $h_D,h_N\in L^2(\cC)$ satisfy
\begin{align}\label{Utrafr-GF}
& \big(\tau^\pm_D f_\pm,h_D\big)_{L^2(\cC)}-\big(\tau^\pm_N f_\pm,h_N\big)_{L^2(\cC)}=0 \\
& \quad \text{ for all }\,f_\pm\in H^{3/2}(\Omega_{\pm})\,\text{ with }
\,\Delta_g f_\pm\in C^\infty(\,\overline{\Omega_{\pm}}\,),
\end{align}
then necessarily $h_D=0$ and $h_N=0$. To this end, pick an arbitrary
$h\in L^2(\cC)$ and consider $f_\pm:=\cS^\pm_0 h$ in $\Omega_{\pm}$.
Then $f_\pm\in H^{3/2}_\Delta(\Omega_{\pm})$ due to \eqref{PRop-AA.1}.
Also, relying on \eqref{PRop-AA.2} and the fact that, by design, the potentials
$V^\pm_0$ vanish in $\Omega_\pm$, we may write
$\Delta_g f_\pm=(-\Delta_g+V^\pm_0)f_\pm=0$ in $\Omega_\pm$.
Granted these properties of $f_\pm$, from \eqref{Utrafr-GF}, \eqref{PRop-AA.4}, \eqref{PRop-AA.5},
and \eqref{PRop-AA.3}, one concludes that
\begin{align}\label{Utrafr-GF-Tga.1}
0 &=\big(\tau^\pm_D f_\pm,h_D\big)_{L^2(\cC)}-\big(\tau^\pm_N f_\pm,h_N\big)_{L^2(\cC)}
\nonumber\\
&=\big(S^\pm_0 h,h_D\big)_{L^2(\cC)}-\Big(\big(-\tfrac{1}{2}I+(K_0^\pm)^\ast\big)h\,,\,h_N\Big)_{L^2(\cC)}
\nonumber\\
&=\big(h,S^\pm_0 h_D\big)_{L^2(\cC)}-\big(h\,,\,(-\tfrac{1}{2}I+K_0^\pm)h_N\big)_{L^2(\cC)}
\nonumber\\
&=\big(h\,,\,S^\pm_0 h_D-(-\tfrac{1}{2}I+K_0^\pm)h_N\big)_{L^2(\cC)}.
\end{align}
With this in hand, the arbitrariness of $h\in L^2(\cC)$ then forces
\begin{align}\label{Utrafr-GF-Tga.2}
S^\pm_0 h_D-(-\tfrac{1}{2}I+K_0^\pm)h_N=0.
\end{align}

Next, we pick two arbitrary functions $\phi_\pm\in C^\infty_0(\Omega_\pm)$ and, this time,
consider $f_\pm:=\cG_0^\pm\phi_\pm$ in $\Omega_\pm$. Then \eqref{khaba-GGG-XXX.2} 
ensures that $f_\pm\in H^2(\Omega_\pm)$. Given that, by design, $V^\pm_0$ vanish in 
$\Omega_\pm$, one also has 
$\Delta_g f_\pm=(-\Delta_g+V^\pm_0)\cG_0^\pm\phi_\pm=\phi_\pm$ in $\Omega_\pm$.
Having established these properties of $f_\pm$, \eqref{Utrafr-GF} implies that
\begin{align}\label{Utrafr-GF-Tga.7}
\big(\tau^\pm_D f_\pm,h_D\big)_{L^2(\cC)}-\big(\tau^\pm_N f_\pm,h_N\big)_{L^2(\cC)}=0.
\end{align}
Now we take a closer look at the two terms in the left-hand side of \eqref{Utrafr-GF-Tga.7}.
For the first term we write
\begin{align}\label{Utrafr-GF-Tga.8}
\big(\tau^\pm_D f_\pm,h_D\big)_{L^2(\cC)}
&=\int_{\cC}\Big(\int_{\Omega_\pm}E^{\pm}_0(x,y)\overline{\phi_\pm(y)}\,d{\mathcal V}_{\!g}(y)\Big)
h_D(x)\,d\sigma_{\!g}(x)
\nonumber\\
&=\int_{\Omega_\pm}\Big(\int_{\cC}E^{\pm}_0(x,y)h_D(x)\,d\sigma_{\!g}(x)\Big)
\overline{\phi_\pm(y)}\,d{\mathcal V}_{\!g}(y)
\nonumber\\
&=\int_{\Omega_\pm}(\cS^\pm_0h_D)(y)\overline{\phi_\pm(y)}\,d{\mathcal V}_{\!g}(y),
\end{align}
where the first equality uses the definition of $E^{\pm}_0(x,y)$, the second equality
is based on Fubini's theorem, while the third equality is a consequence of \eqref{ju6gfc}
and \eqref{slpot2-XXX-YYa}. For the second term in \eqref{Utrafr-GF-Tga.7} we compute
\begin{align}\label{Utrafr-GF-Tga.9}
& \big(\tau^\pm_N f_\pm,h_N\big)_{L^2(\cC)}
\nonumber\\
& \quad 
=\int_{\cC}\Big(\int_{\Omega_\pm}\big\langle\mathfrak{n}^{\pm}(x),{\rm grad}_{g_x}[E^\pm_0(x,y)]\big\rangle_{T_xM}
\overline{\phi_\pm(y)}\,d{\mathcal V}_{\!g}(y)\Big)
h_N(x)\,d\sigma_{\!g}(x)
\nonumber\\
& \quad 
=\int_{\Omega_\pm}\Big(\int_{\cC}\big\langle\mathfrak{n}^{\pm}(x),{\rm grad}_{g_x}[E^\pm_0(x,y)]\big\rangle_{T_xM}
h_N(x)\,d\sigma_{\!g}(x)\Big)
\overline{\phi_\pm(y)}\,d{\mathcal V}_{\!g}(y)
\nonumber\\
& \quad
=\pm\int_{\Omega_\pm}(\cD^\pm_0h_N)(y)\overline{\phi_\pm(y)}\,d{\mathcal V}_{\!g}(y),
\end{align}
where the first equality relies on the definition of $E^{\pm}_0(x,y)$, the second equality
uses Fubini's theorem, while the third equality is implied by \eqref{ju6gfc}
and \eqref{dlpot2-XXX-YYa}.

Together, \eqref{Utrafr-GF-Tga.7}, \eqref{Utrafr-GF-Tga.8}, and \eqref{Utrafr-GF-Tga.9} imply that
\begin{align}\label{Utrafr-GF-Tga.10}
\int_{\Omega_\pm}\Big\{(\cS^\pm_0h_D)(y)\mp(\cD^\pm_0h_N)(y)\Big\}\overline{\phi_\pm(y)}\,d{\mathcal V}_{\!g}(y)=0
\end{align}
which, in view of the arbitrariness of $\phi_\pm\in C^\infty_0(\Omega_\pm)$, forces
\begin{align}\label{Utrafr-GF-Tga.11}
\cS^\pm_0h_D\mp\cD^\pm_0h_N=0 \,\text{ in } \,\Omega_\pm.
\end{align}
Applying $\tau_D^\pm$ to both sides of \eqref{Utrafr-GF-Tga.11} then yields,
on account of \eqref{PRop-AA.4} and \eqref{PRop-AA.9},
\begin{align}\label{Utrafr-GF-Tga.12}
S^\pm_0h_D-\big(\tfrac{1}{2}I+K_0^\pm\big)h_N=0 \,\text{ on } \,\cC.
\end{align}

The end-game in the proof of the lemma is as follows.
Subtracting \eqref{Utrafr-GF-Tga.12} from \eqref{Utrafr-GF-Tga.2} proves that $h_N=0$.
Using this back into \eqref{Utrafr-GF-Tga.12} leads to $S^\pm_0h_D=0$ which, in light
of the injectivity of the single layer operators in \eqref{PRop-AA.3}, shows that $h_D=0$
as well. Hence, $h_D=h_N=0$, as desired.
\end{proof}

Going further, in the next theorem we define quasi boundary triples for $S_{max,\pm}=\big(S_{min,\pm}\big)^*$
with the natural trace maps as boundary maps defined on the domain of the operators
\begin{equation}\label{tpm}
T_\pm:=-\Delta_g+V_\pm,\quad\dom(T_\pm):=H^{3/2}_\Delta(\Omega_\pm),
\end{equation}
in $L^2(\Omega_{\pm})$. One recalls that
\begin{equation}\label{tpma}
T=\begin{pmatrix} T_+ & 0 \\ 0 & T_-\end{pmatrix}  \, \text{ in } \,
L^2(M) = L^2(\Omega_+) \oplus L^2(\Omega_-).
\end{equation}

With this choice of boundary maps the values of the corresponding Weyl function are
Dirichlet-to-Neumann maps (up to a minus sign).

\begin{theorem}\label{qbtlip}
Let $\Omega_\pm$ and $T_\pm$ be as above, and let
\begin{equation}\label{Yrafac}
\Gamma_0^\pm,\Gamma_1^\pm:H^{3/2}_\Delta(\Omega_\pm)\to L^2(\cC),\quad
\Gamma_0^\pm f_\pm:=\tau^\pm_D f_\pm,\quad\Gamma^\pm_1 f_\pm:=-\tau^\pm_N f_\pm.
\end{equation} 	
Then $\{L^2(\cC),\Gamma^\pm_0,\Gamma^\pm_1\}$ are quasi boundary triples for
$T_\pm\subset S_{max,\pm}$ such that
\begin{equation}\label{u655tgf}
S_{min,\pm}=T_\pm\upharpoonright\big(\ker(\Gamma_0^\pm)\cap\ker(\Gamma_1^\pm)\big).
\end{equation}
In addition, the following statements $(i)$--$(iii)$ hold: \\[1mm]
$(i)$ The Dirichlet realizations $A_{D,\pm}$ and the Neumann realizations
$A_{N,\pm}$ of $-\Delta_g+V_\pm$ in $L^2(\Omega_\pm)$ coincide with
$A_{0,\pm}=T\upharpoonright\ker(\Gamma^\pm_0)$ and
$A_{1,\pm}=T\upharpoonright\ker(\Gamma^\pm_1)$, respectively,
\begin{align}\label{u655-4r}
A_{D,\pm} &=T_\pm\upharpoonright\ker(\Gamma^\pm_0)=(-\Delta_g+V_\pm)\upharpoonright
\big\{f_\pm\in H^{3/2}_\Delta(\Omega_\pm)\,\big|\,\tau^\pm_D f_\pm=0\big\},
\\[1mm]
A_{N,\pm} &=T_\pm\upharpoonright\ker(\Gamma^\pm_1)=(-\Delta_g+V_\pm)\upharpoonright
\big\{f_\pm\in H^{3/2}_\Delta(\Omega_\pm)\,\big|\,\tau^\pm_N f_\pm=0\big\},
\end{align}
and both operators are self-adjoint in $L^2(\Omega_\pm)$. \\[1mm]
$(ii)$ The values $\gamma_\pm(z):L^2(\cC)\supset H^1(\cC)\to L^2(\Omega_\pm)$
of the $\gamma$-fields are given by
\begin{equation}\label{gammas}
\gamma_\pm(z)\varphi=f_\pm,\quad\varphi\in H^1(\cC),
\quad z\in\rho(A_{D,\pm}),
\end{equation}
where $f_\pm\in L^2(\Omega_\pm)$ are the unique solutions of the boundary value problems
\begin{equation}\label{bvp}
f_\pm\in H^{3/2}_\Delta(\Omega_\pm),\quad
(-\Delta_g+V_\pm-z)f_\pm=0,\quad\tau^\pm_D f_\pm=\varphi.
\end{equation}
$(iii)$ The values $M_\pm (z):L^2(\cC)\supset H^1(\cC)\to L^2(\cC)$
of the Weyl functions  are Dirichlet-to-Neumann maps, given by
\begin{equation}\label{dnmaps}
M_\pm(z)\varphi=-\tau^\pm_N f_\pm,\quad\varphi\in H^1(\cC),
\quad z\in\rho(A_{D,\pm}),
\end{equation}
where $f_\pm=\gamma_\pm(z)\varphi$ are the unique solutions of \eqref{bvp}.
\end{theorem}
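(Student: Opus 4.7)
The strategy is to verify the three axioms of a quasi boundary triple from Definition~\ref{Def-triple} for $T_\pm\subseteq S_{max,\pm}$ relative to the boundary maps in \eqref{Yrafac}, and then to unpack the concrete meaning of $A_{0,\pm}$, $A_{1,\pm}$, $\gamma_\pm$, and $M_\pm$ in the Lipschitz setting. The principal analytic inputs are the trace theory for $H^s_\Delta(\Omega_\pm)$ recalled in \eqref{trace}--\eqref{tra-trf}, the density Lemma~\ref{ytFVVa-LL}, and the classical second Green identity for $-\Delta_g+V_\pm$ on Lipschitz domains, all of which are developed in \cite{BGMM16}.

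First I would record $\overline{T_\pm}=S_{max,\pm}$: the inclusion $\overline{T_\pm}\subseteq S_{max,\pm}$ is immediate from $T_\pm\subset S_{max,\pm}$, while the reverse inclusion amounts to graph-norm density of $H^{3/2}_\Delta(\Omega_\pm)$ in $H^0_\Delta(\Omega_\pm)$, a standard mollification/regularization result on Lipschitz domains available in \cite{BGMM16}. Next, axiom~$(i)$ of Definition~\ref{Def-triple}, namely \eqref{green1}, is the second Green identity for $-\Delta_g+V_\pm$ on $\Omega_\pm$; the boundary pairings are well-defined on $H^{3/2}_\Delta(\Omega_\pm)$ because by \eqref{trace} the Dirichlet traces lie in $H^1(\cC)\subset L^2(\cC)$ and the Neumann traces in $L^2(\cC)$, and the sign convention $\Gamma_1^\pm=-\tau_N^\pm$ is precisely what converts the analytic identity $(\tau_N^\pm f,\tau_D^\pm g)_{L^2(\cC)}-(\tau_D^\pm f,\tau_N^\pm g)_{L^2(\cC)}$ into the form demanded by \eqref{green1}. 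Axiom~$(ii)$, density of $\ran((\Gamma_0^\pm,\Gamma_1^\pm)^\top)$ in $L^2(\cC)\times L^2(\cC)$, is exactly the content of Lemma~\ref{ytFVVa-LL}, since the subspace $\{f\in H^{3/2}(\Omega_\pm):\Delta_g f\in C^\infty(\,\overline{\Omega_\pm}\,)\}$ appearing there already sits inside $\dom(T_\pm)$. Axiom~$(iii)$ is the self-adjointness of $A_{0,\pm}=T_\pm\upharpoonright\ker(\Gamma_0^\pm)$, which is the Dirichlet realization $A_{D,\pm}$ of $-\Delta_g+V_\pm$; its self-adjointness follows from Lax--Milgram applied to the Dirichlet form on $\mathring H^1(\Omega_\pm)$, and the operator-domain description $\{f\in H^{3/2}_\Delta(\Omega_\pm):\tau_D^\pm f=0\}$ is provided by the Lipschitz-domain regularity theory in \cite{BGMM16}.

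Once the quasi boundary triple is in place, the remainder is bookkeeping. Item~$(i)$ of the theorem has been recorded for $A_{D,\pm}$, while the analogous form-theoretic argument, or a direct appeal to \cite{BGMM16}, identifies $A_{1,\pm}=T_\pm\upharpoonright\ker(\Gamma_1^\pm)$ with the Neumann realization $A_{N,\pm}$ and yields its self-adjointness. The identification $S_{min,\pm}=T_\pm\upharpoonright(\ker(\Gamma_0^\pm)\cap\ker(\Gamma_1^\pm))$ follows from the classical trace characterization $\mathring H^2(\Omega_\pm)=\{f\in H^2(\Omega_\pm):\tau_D^\pm f=\tau_N^\pm f=0\}$ together with $H^2(\Omega_\pm)\subset H^{3/2}_\Delta(\Omega_\pm)$. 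For items~$(ii)$ and $(iii)$ I would simply unfold the abstract definitions: for $z\in\rho(A_{D,\pm})$, unique solvability of \eqref{bvp} for $\varphi\in H^1(\cC)=\ran(\Gamma_0^\pm)$ is classical (injectivity from $z\in\rho(A_{D,\pm})$, existence from surjectivity of $\tau_D^\pm$ in \eqref{tra-trf}), so that $\gamma_\pm(z)\varphi=f_\pm$; applying $\Gamma_1^\pm=-\tau_N^\pm$ to $f_\pm$ then yields \eqref{dnmaps}, displaying $-M_\pm(z)$ as the Dirichlet-to-Neumann map of $-\Delta_g+V_\pm-z$ on $\Omega_\pm$. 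The main obstacle is analytic rather than structural: every step above rests on the full $H^{3/2}_\Delta$-trace calculus, on $H^{3/2}$-regularity for the Dirichlet problem, and on the graph-norm density $H^{3/2}_\Delta\hookrightarrow H^0_\Delta$ on a general Lipschitz domain, and these non-trivial facts are precisely the machinery assembled in \cite{BGMM16} that the present proof invokes rather than re-proves.
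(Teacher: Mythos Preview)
Your argument is correct and follows the paper's overall strategy: verify the three quasi-boundary-triple axioms from Definition~\ref{Def-triple} (Green's identity from \cite{BGMM16}, range density from Lemma~\ref{ytFVVa-LL}, self-adjointness of the Dirichlet realization from \cite{BGMM16}), and then read off the concrete descriptions of $A_{0,\pm}$, $A_{1,\pm}$, $\gamma_\pm$, $M_\pm$ from the abstract definitions.

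The one substantive difference lies in how $\overline{T_\pm}=S_{max,\pm}$ is obtained. You defer this to a graph-norm density statement $H^{3/2}_\Delta(\Omega_\pm)\hookrightarrow H^0_\Delta(\Omega_\pm)$, quoted as a regularization result from \cite{BGMM16}. The paper instead supplies a short self-contained argument: it shows that $\dom(A_{D,\pm})+\dom(A_{N,\pm})$ is dense in $\dom(S_{max,\pm})$ for the graph norm by taking any $h$ orthogonal to this sum in the graph inner product, deducing from self-adjointness of $A_{D,\pm}$ and $A_{N,\pm}$ that $S_{max,\pm}h\in\dom(A_{D,\pm})\cap\dom(A_{N,\pm})=\dom(S_{min,\pm})$ with $S_{min,\pm}S_{max,\pm}h=-h$, and then concluding $h=0$ from $0=\|h\|^2+\|S_{max,\pm}h\|^2$. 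Both routes are valid; the paper's avoids any appeal to an external density lemma and uses only the operators already on the table, while yours is shorter provided the cited density is indeed available. A minor remark: your justification of \eqref{u655tgf} as stated gives only the inclusion $\dom(S_{min,\pm})\subset\ker(\Gamma_0^\pm)\cap\ker(\Gamma_1^\pm)$; the reverse inclusion either needs the $H^{3/2}_\Delta$-level regularity statement from \cite{BGMM16} (as the paper cites) or follows automatically from the general fact $\ker(\Gamma)=\dom(S)$ once the quasi boundary triple is established.
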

\begin{proof}
Let us verify the properties stipulated in Definition~\ref{Def-triple} in the current case.
First, the abstract Green identity \eqref{green1} presently corresponds to the second Green
identity for the Schr\"odinger operator \eqref{eqn.lmnae} on the Lipschitz domain $\Omega$,
proved in \cite{BGMM16}. Second, the fact that $\ran(\Gamma_0^\pm,\Gamma_1^\pm)^\top$ is dense
in $L^2(\cC)\times L^2(\cC)$ is readily implied by Lemma~\ref{ytFVVa-LL} (bearing in mind \eqref{y555}).
Third, the self-adjointness of $A_{0,\pm}=T_\pm\upharpoonright\ker(\Gamma_0^\pm)$ is clear from the
fact that these operators coincide with the self-adjoint Dirichlet realizations of $-\Delta_g+V_{\pm}$
in $\Omega_{\pm}$ studied in \cite{BGMM16}.

Fourth, we focus on establishing that $\overline{T}_\pm=S_{max,\pm}$. In turn, since
${\rm dom}(T_\pm)$ contains ${\rm dom}(A_{D,\pm})+{\rm dom}(A_{N,\pm})$, this is going to be a consequence of the fact that
\begin{align}\label{uyvvva8h}
\begin{split}
& \text{${\rm dom}(A_{D,\pm})+{\rm dom}(A_{N,\pm})$ is dense in ${\rm dom}(S_{max,\pm})$} \\
& \quad \text{ with respect to the graph norm.}
\end{split}
\end{align}
To prove \eqref{uyvvva8h}, we assume that
$h\in {\rm dom}(S_{max,\pm})$ is such that
\begin{align}\label{uyvvva8h.2}
\begin{split}
& (f_D+f_N,h)_{L^2(\Omega_{\pm})}+(S_{max,\pm}(f_D+f_N),S_{max,\pm}h)_{L^2(\Omega_{\pm})}=0
\\
& \quad \text{ for all }\,\,f_D\in{\rm dom}(A_{D,\pm}), \, f_N\in{\rm dom}(A_{N,\pm}).
\end{split}
\end{align}
Then
\begin{equation}\label{uyvvva8h.2A}
(A_{D,\pm}f_D,S_{max,\pm}h)_{L^2(\Omega_{\pm})}=(f_D,-h)_{L^2(\Omega_{\pm})}, 
\quad f_D\in{\rm dom}(A_{D,\pm}),
\end{equation}
and
\begin{equation}\label{uyvvva8h.2B}
(A_{N,\pm}f_N,S_{max,\pm}h)_{L^2(\Omega_{\pm})}=(f_N,-h)_{L^2(\Omega_{\pm})}, 
\quad f_N\in{\rm dom}(A_{N,\pm}).
\end{equation}
Together, \eqref{uyvvva8h.2A} and \eqref{uyvvva8h.2B} prove that
\begin{align}\label{uyvvva8h.3}
\begin{split}
& S_{max,\pm}h\in{\rm dom}(A_{D,\pm})\cap{\rm dom}(A_{N,\pm})={\rm dom}(S_{min,\pm}) \\
& \quad \text{ and }\,\,S_{min,\pm}S_{max,\pm}h=-h.
\end{split}
\end{align}
Finally, from
\begin{align}\label{uyvvva8h.4}
\begin{split}
0 &=\big((I+S_{min,\pm}S_{max,\pm})h,h\big)_{L^2(\Omega_{\pm})} \\
&=(h,h)_{L^2(\Omega_{\pm})}+(S_{max,\pm}h,S_{max,\pm}h)_{L^2(\Omega_{\pm})},
\end{split}
\end{align}
one concludes that $h=0$. Hence \eqref{uyvvva8h} holds, completing the proof of the fact that
$\overline{T}_\pm=S_{max,\pm}$.

This shows that $\{L^2(\cC),\Gamma^\pm_0,\Gamma^\pm_1\}$ are indeed
quasi boundary triples for $T_\pm$. That $T_\pm\subset S_{max,\pm}$ is clear from definitions,
while \eqref{u655tgf} has been established in \cite{BGMM16}.

Thanks to work in \cite{BGMM16}, the assertions in $(ii)$ and $(iii)$ follow immediately
from the definition of the $\gamma$-field and the Weyl function. We refer the interested
reader to \cite{BGMM16} for more details. Here we only wish to note that in the case of
a bounded Lipschitz domain in the flat Euclidean setting (i.e., ${\mathbb{R}}^n$ equipped
with the standard metric) a similar result has been established in \cite[Theorem 4.1]{BM14}.
\end{proof}

In the following we establish the link to the coupling procedure discussed in Section~\ref{s3}.
First of all we set $\cH:=L^2(\cC)$ so that the quasi boundary triples in Section~\ref{s3} are those
in Theorem~\ref{qbtlip}. The operator $S$ in \eqref{sts} is the direct orthogonal
sum of the minimal realizations $S_{min,+}$ and $S_{min,-}$,
\begin{equation}\label{i6tffF}
S=\begin{pmatrix} S_{min,+} & 0 \\ 0 & S_{min,-}\end{pmatrix},
\end{equation}
and the boundary mappings in the quasi boundary triple
$\{L^2(\cC)\oplus L^2(\cC),\Gamma_0,\Gamma_1\}$ in \eqref{qbtcoup} are now given by
\begin{equation}\label{u8bvcca}
\Gamma_0 f=\begin{pmatrix} \tau^+_D f_+ \\ \tau^-_D f_- \end{pmatrix}
\, \text{ and } \,
\Gamma_1 f=\begin{pmatrix} -\tau^+_N f_+ \\ -\tau^-_N f_- \end{pmatrix},
\end{equation}
where $f=(f_+,f_-)\in\dom(T)$ with $T_{\pm}$, $T$ given as in \eqref{tpm},
\eqref{tpma}. The self-adjoint operator corresponding to $\ker(\Gamma_0)$
is the orthogonal sum of the Dirichlet operators $A_{D,+}$ and $A_{D,-}$ in $L^2(\Omega_+)$
and $L^2(\Omega_-)$, respectively, 
\begin{equation}\label{add}
A_0=\begin{pmatrix} A_{D,+} & 0 \\ 0 & A_{D,-}\end{pmatrix}.
\end{equation}

The following lemma shows that the coupling of the quasi boundary triples
$\{L^2(\cC),\Gamma^+_0,\Gamma^+_1\}$ and $\{L^2(\cC),\Gamma^-_0,\Gamma^-_1\}$
in Theorem~\ref{resthm} leads to the self-adjoint Schr\"{o}\-din\-ger operator in \eqref{sop}.

\begin{lemma}\label{tapasst}
The operator
\begin{equation}\label{ta}
T\upharpoonright\big\{f=(f_+,f_-)^\top\in\dom(T) \, \big| \,\Gamma^+_0f_+=\Gamma_0^-f_-,\,
\Gamma^+_1f_+=-\Gamma_1^-f_-\big\}
\end{equation}
coincides with the self-adjoint operator $A$ in \eqref{sop}.
\end{lemma}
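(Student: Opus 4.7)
The plan is to establish $\widetilde A=A$, where $\widetilde A$ denotes the operator on the right-hand side of \eqref{ta}, by a two-way inclusion argument that leverages the self-adjointness of $A$ together with Theorem~\ref{resthm}\,$(i)$. The philosophy is to verify the easy inclusion $A\subseteq\widetilde A$ directly, and then to close the argument abstractly, without any regularity analysis on the domain of $\widetilde A$.

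For the inclusion $A\subseteq\widetilde A$, I take an arbitrary $f\in\dom (A)=H^2(M)$ and pass to its restrictions $f_\pm:=f|_{\Omega_\pm}\in H^2(\Omega_\pm)\subset H^{3/2}_\Delta(\Omega_\pm)$, so that $(f_+,f_-)^\top\in\dom (T)$. The Dirichlet matching $\tau^+_D f_+=\tau^-_D f_-$ follows from the fact that an $H^2(M)$-function has a single, two-sided trace on the interface $\cC$, and this is exactly $\Gamma^+_0 f_+=\Gamma^-_0 f_-$. For the Neumann condition I use that $\operatorname{grad}_g f\in H^1(M,TM)$ also admits a common trace on $\cC$ from the two sides; combined with the relation $\mathfrak{n}^+=-\mathfrak{n}^-$ this yields $\tau^+_N f_++\tau^-_N f_-=0$, i.e., $\Gamma^+_1 f_+=-\Gamma^-_1 f_-$. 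Under the identification $L^2(M)\simeq L^2(\Omega_+)\oplus L^2(\Omega_-)$, the block operator $T$ acts as $-\Delta_g+V$ componentwise, hence $\widetilde A f=Af$.

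Having secured $A\subseteq\widetilde A$, I finish by an adjoint/symmetry argument. Theorem~\ref{resthm}\,$(i)$, applied to the quasi boundary triples from Theorem~\ref{qbtlip}, asserts that $\widetilde A$ is symmetric, so $\widetilde A\subseteq \widetilde A^*$. Taking adjoints in $A\subseteq\widetilde A$ and using that $A$ is self-adjoint by \eqref{sop} gives $\widetilde A^*\subseteq A^*=A$, whence
\begin{equation}
 \widetilde A\subseteq \widetilde A^*\subseteq A\subseteq \widetilde A,
\end{equation}
so $\widetilde A=A$, as claimed.

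The main obstacle I foresee is purely a matter of sign bookkeeping in the Neumann condition: the definition $\Gamma^\pm_1=-\tau^\pm_N$ carries a minus sign which has to be combined correctly with the orientation reversal $\mathfrak{n}^+=-\mathfrak{n}^-$ in order to reproduce the sign in the coupling condition $\Gamma^+_1 f_+=-\Gamma^-_1 f_-$. The detour through the symmetry--self-adjointness dichotomy is precisely what lets me avoid the more delicate question of whether elements of $\dom(\widetilde A)$ are a priori $H^2$ across the Lipschitz interface $\cC$, a fact that would otherwise require a separate elliptic regularity argument.
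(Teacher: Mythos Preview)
Your proof is correct and follows essentially the same route as the paper: first show $A\subseteq\widetilde A$ via the two-sided trace and normal-derivative matching for $H^2(M)$-functions, then invoke Theorem~\ref{resthm}\,$(i)$ to get symmetry of $\widetilde A$ and conclude from the self-adjointness of $A$ that the symmetric extension $\widetilde A\supseteq A$ must coincide with $A$. The paper states the last step more tersely (``symmetric, and hence self-adjoint, as it extends the self-adjoint operator $A$''), while you spell out the chain $\widetilde A\subseteq\widetilde A^*\subseteq A^*=A\subseteq\widetilde A$; the arguments are equivalent.
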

\begin{proof}
Since any function $f\in H^2(M)$ satisfies
\begin{align}
\Gamma^+_0f_+ &= \tau_D^+f_+=\tau_D^-f_-=\Gamma_0^-f_-,  \label{uyt5rff} \\
\Gamma^+_1f_+ &= -\tau_N^+f_+=\tau_N^-f_-=-\Gamma_1^-f_-,  \label{yr3edda}
\end{align}
and $f_\pm\in H^2(\Omega_\pm)\subset H^{3/2}_\Delta(\Omega_\pm)=\dom(T_\pm)$, 
it follows that $H^2(M)=\dom(A)$ is contained in the domain of the operator in
\eqref{ta}. On the other hand, it follows from Theorem~\ref{resthm}\,$(i)$ that the
operator in \eqref{ta} is symmetric, and hence self-adjoint (as it extends the self-adjoint operator $A$).
\end{proof}

As an immediate consequence of the observation in Lemma~\ref{tapasst} we obtain the
next corollary. First, we note that the self-adjointness of the operator $A$ in \eqref{sop},
Theorem~\ref{resthm}, and the fact that
\begin{equation}\label{ta2}
A=T\upharpoonright\big\{f=(f_+,f_-)^\top\in\dom(T)\, \big| \,\Gamma^+_0f_+=\Gamma_0^-f_-,\,
\Gamma^+_1f_+=-\Gamma_1^-f_- \big\}, 
\end{equation}
imply
\begin{equation}\label{uy53wwws}
\ran\big(\Gamma^\pm_1\upharpoonright\dom(A_{D,\pm})\big)
\subseteq \ran\big(M_+(z)+M_-(z)\big), \quad z \in\dC_+\cup\dC_-, 
\end{equation}
where $M_\pm$ are (minus) the Dirichlet-to-Neumann maps in \eqref{dnmaps}.

\begin{corollary}\label{utr444}
Let $A_0$ be the orthogonal sum of the Dirichlet operators in \eqref{add}, let $M_\pm$
be the $($minus\,$)$ Dirichlet-to-Neumann maps in \eqref{dnmaps} and let $\gamma$ be the orthogonal
sum of the $\gamma$-fields in \eqref{gammas} $($cf.\ \eqref{gamgam}$)$.
For all $z \in\rho(A)\cap\rho(A_0)$, the resolvent of $A$ is given by
\begin{equation}\label{y65rrf}
(A- z I_{L^2(M)})^{-1}=(A_0- z I_{L^2(M)})^{-1}+\gamma(z)\Theta(z)
\gamma(\ol{z})^*,
\end{equation}
where
\begin{equation}\label{Trafa}
\Theta(z)=\begin{pmatrix} -(M_+(z)+M_-(z))^{-1} &
-(M_+(z)+M_-(z))^{-1} \\ -(M_+(z)+M_-(z))^{-1} &
-(M_+(z)+M_-(z))^{-1} \end{pmatrix}.
\end{equation}
\end{corollary}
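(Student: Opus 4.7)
The plan is to recognize that this corollary is essentially an immediate translation of the abstract Krein-type resolvent formula in Theorem~\ref{resthm}(iii) into the concrete Schrödinger setting on $M$, once all the identifications in Section~\ref{s5} have been made. Almost all of the real work has already been done: Theorem~\ref{qbtlip} provides the quasi boundary triples $\{L^2(\cC),\Gamma_0^\pm,\Gamma_1^\pm\}$, with $\gamma$-fields $\gamma_\pm$ and Weyl functions $M_\pm$ (the negative Dirichlet-to-Neumann maps), and Lemma~\ref{tapasst} identifies the abstract coupling operator in \eqref{ta2} with the self-adjoint Schrödinger operator $A = -\Delta_g + V$ on $H^2(M)$.

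The first step is to assemble the two quasi boundary triples into the coupled quasi boundary triple $\{L^2(\cC)\oplus L^2(\cC),\Gamma_0,\Gamma_1\}$ via \eqref{u8bvcca}, note that $A_0 = A_{D,+} \oplus A_{D,-}$ as in \eqref{add}, and observe that $\rho(A_0) = \rho(A_{D,+}) \cap \rho(A_{D,-})$. The $\gamma$-field and Weyl function of the coupled triple then have the block-diagonal form of \eqref{gamgam}, with entries given by \eqref{gammas} and \eqref{dnmaps}.

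The second step is to verify the hypotheses of Theorem~\ref{resthm}(iii) in the concrete setting. By Lemma~\ref{tapasst}, the abstract operator $A$ in \eqref{ta2} equals the known self-adjoint Schrödinger operator $-\Delta_g + V$ on $H^2(M)$; hence the range condition
\begin{equation}
\ran\bigl(\Gamma_1^\pm \upharpoonright \dom(A_{D,\pm})\bigr) \subseteq \ran\bigl(M_+(z) + M_-(z)\bigr), \quad z \in \dC_+ \cup \dC_-,
\end{equation}
holds automatically (this is exactly \eqref{uy53wwws}, which is a direct consequence of self-adjointness via the forward implication of Theorem~\ref{resthm}(iii)).

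The third step is simply to invoke Theorem~\ref{resthm}(iii): for every $z \in \rho(A)\cap\rho(A_0)$, the resolvent formula \eqref{r1} holds with the block matrix $\Theta(z)$ from \eqref{thetayes}, which in the present setting is precisely the matrix in \eqref{Trafa}, and with $A_0$, $\gamma(z)$ as identified in step one. I do not anticipate any genuine obstacle here; the only point that requires a moment's care is ensuring that the domain conditions on $(M_+(z)+M_-(z))^{-1}$ are meaningful — these are the Dirichlet-to-Neumann maps of Theorem~\ref{qbtlip}(iii), defined on $H^1(\cC)$, and the right-hand side of \eqref{y65rrf} is interpreted exactly as in the abstract setting, where $\gamma(\bar z)^* f$ lies in the domain of $(M_+(z)+M_-(z))^{-1}$ by the aforementioned range condition. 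Thus the corollary is obtained as a direct specialization, with no new estimates required.
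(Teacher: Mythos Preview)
Your proposal is correct and mirrors the paper's own reasoning: the corollary is stated without a separate proof, the preceding paragraph observing that the self-adjointness of $A$ in \eqref{sop} together with Lemma~\ref{tapasst} and Theorem~\ref{resthm} yields the range condition \eqref{uy53wwws}, after which the resolvent formula \eqref{r1}--\eqref{thetayes} specializes directly to \eqref{y65rrf}--\eqref{Trafa}. Your three-step outline is exactly this argument, with the same identifications and the same appeal to Theorem~\ref{resthm}(iii).
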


In a similar way one obtains a representation for the resolvent of $A$
from Theorem~\ref{resthm2}, where $(A- z I_{L^2(M)})^{-1}$ is compared with the
orthogonal sum of the self-adjoint Dirichlet operator $A_{D,+}$ in $L^2(\Omega_+)$
and the self-adjoint Neumann operator $A_{N,-}$ in $L^2(\Omega_-)$.

\begin{corollary}\label{cor1212}
Let $M_\pm$ be the (minus) Dirichlet-to-Neumann maps in \eqref{dnmaps} and
let $\gamma_\pm$ be the $\gamma$-fields in \eqref{gammas}.
For all $z \in\rho(A)\cap\rho(A_{D,+})\cap\rho(A_{D,-}) \cap\rho(A_{N,-})$, the resolvent of $A$ is given by
\begin{equation}\label{y5rrffa8j}
(A- z I_{L^2(M)})^{-1}=\left(\begin{pmatrix} A_{D,+} & 0 \\ 0 & A_{N,-} \end{pmatrix}
- z I_{L^2(M)}\right)^{-1}
+\widehat\gamma(z)\Sigma(z)\widehat\gamma(\ol{z})^*,
\end{equation}
where
\begin{equation}\label{itrafaf}
\widehat\gamma(z) = \begin{pmatrix} \gamma_+(z) & 0 \\ 0 &
\gamma_-(z)M_-(z)^{-1}\end{pmatrix},  \quad
\Sigma(z) = -\begin{pmatrix} M_+(z) & I_{L^2(\cC)} \\ I_{L^2(\cC)} & -M_-(z)^{-1} \end{pmatrix}^{-1}.
\end{equation}
\end{corollary}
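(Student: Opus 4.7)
The plan is to obtain Corollary~\ref{cor1212} as a direct specialization of the abstract Theorem~\ref{resthm2}, once the concrete-to-abstract dictionary is set up. All the heavy lifting has already been done in Theorem~\ref{qbtlip} and Lemma~\ref{tapasst}, so the argument amounts essentially to verifying the hypotheses of Theorem~\ref{resthm2} and transcribing its conclusion.

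First I would identify the ingredients. Take $\cH:=L^2(\cC)$, $\sH_{\pm}:=L^2(\Omega_\pm)$, and let $\{L^2(\cC),\Gamma_0^{\pm},\Gamma_1^{\pm}\}$ be the quasi boundary triples for $T_\pm\subset S_{max,\pm}$ provided by Theorem~\ref{qbtlip}, with $\Gamma_0^\pm f_\pm=\tau_D^\pm f_\pm$ and $\Gamma_1^\pm f_\pm=-\tau_N^\pm f_\pm$. By Theorem~\ref{qbtlip}\,(i), the extensions $A_{0,\pm}=T_\pm\upharpoonright\ker(\Gamma_0^\pm)$ coincide with the Dirichlet realizations $A_{D,\pm}$, and $A_{1,-}=T_-\upharpoonright\ker(\Gamma_1^-)$ coincides with the Neumann realization $A_{N,-}$; in particular both $A_{D,\pm}$ and $A_{N,-}$ are self-adjoint in $L^2(\Omega_\pm)$ and $L^2(\Omega_-)$, respectively. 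The associated $\gamma$-fields and Weyl functions $\gamma_\pm$, $M_\pm$ are those described in Theorem~\ref{qbtlip}\,(ii)--(iii).

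Next I would verify that the operator $A$ defined in \eqref{sop} fits the abstract coupling framework: by Lemma~\ref{tapasst} it coincides with the abstract coupling operator from Theorem~\ref{resthm}, and it is self-adjoint in $L^2(M)=L^2(\Omega_+)\oplus L^2(\Omega_-)$. This is precisely Hypothesis~\ref{h1} (or equivalently, the condition appearing in Theorem~\ref{resthm}\,(iii)), and therefore the range condition \eqref{uy53wwws} holds automatically. In addition, since $A_{1,-}=A_{N,-}$ is self-adjoint, all the hypotheses of Theorem~\ref{resthm2} are met.

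With these identifications in place, applying Theorem~\ref{resthm2} gives, for every $z\in\rho(A)\cap\rho(A_{0,+})\cap\rho(A_{0,-})\cap\rho(A_{1,-})=\rho(A)\cap\rho(A_{D,+})\cap\rho(A_{D,-})\cap\rho(A_{N,-})$, the formula
\begin{equation}
(A-zI_{L^2(M)})^{-1}=\left(\begin{pmatrix} A_{D,+} & 0 \\ 0 & A_{N,-}\end{pmatrix}-zI_{L^2(M)}\right)^{-1}+\widehat\gamma(z)\Sigma(z)\widehat\gamma(\overline z)^*,
\end{equation}
with $\widehat\gamma(z)$ and $\Sigma(z)$ exactly as in \eqref{itrafaf}. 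This is the claim of Corollary~\ref{cor1212}. The only mild obstacle is a bookkeeping one, namely confirming that the $\widehat\gamma(z)$ and $\Sigma(z)$ produced by the abstract theorem literally match \eqref{itrafaf} under the identifications made in Theorem~\ref{qbtlip}; since $A_{0,\pm}=A_{D,\pm}$, $A_{1,-}=A_{N,-}$, and $\cH=L^2(\cC)$, this is immediate from comparing the abstract definitions with the concrete ones, so no further computation is needed.
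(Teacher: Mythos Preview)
Your proposal is correct and matches the paper's own approach: the corollary is stated immediately after Corollary~\ref{utr444} with the remark that it is obtained ``in a similar way'' from Theorem~\ref{resthm2}, and your argument does precisely this---verify via Theorem~\ref{qbtlip} and Lemma~\ref{tapasst} that the hypotheses of Theorem~\ref{resthm2} hold (in particular the self-adjointness of $A_{1,-}=A_{N,-}$ and of the coupled operator $A$), then read off the conclusion under the identifications $A_{0,\pm}=A_{D,\pm}$, $A_{1,-}=A_{N,-}$, $\cH=L^2(\cC)$.
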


Our next aim is to illustrate the abstract third Green identity from Section~\ref{s4}
in the present context of Schr\"{o}dinger operators on Lipschitz domains on smooth, boundaryless Riemannian manifolds. Since $\dom(A)=H^2(M)$
and the graph norm \eqref{gn} is equivalent to the usual norm on the Sobolev space $H^2(M)$
we have $\sH_2=H^2(M)$ and hence the Gelfand triple in \eqref{gelfand} is of the form
\begin{equation}\label{ju5433}
H^2(M)\hookrightarrow L^2(M)\hookrightarrow H^{-2}(M).
\end{equation}
Using the notation in \eqref{gammagamma} we note that the mappings
\begin{equation}\label{y5rrrafa}
\Upsilon_0=\tau_D^+:H^2(M)\rightarrow L^2(\cC),  \quad
\Upsilon_1=-\tau_N^+:H^2(M)\rightarrow L^2(\cC)
\end{equation}
are bounded, which is clearly in accordance (and also follows from) Lemma~\ref{p3.4}.
Thus, the dual operators
\begin{equation}\label{ut54rafa}
\Upsilon_0^*=(\tau_D^+)^*:L^2(\cC)\rightarrow H^{-2}(M),  \quad
\Upsilon_1^*=(-\tau_N^+)^*:L^2(\cC)\rightarrow H^{-2}(M)
\end{equation}
are bounded. Moreover, under the additional assumption that
\begin{equation}\label{khaba-VVV}
\text{$V \geq 0$ and not identically zero on $M$,}
\end{equation}
it has been proved in \cite[p.~27]{MT00b} that
\begin{equation}\label{khaba-GGG}
-\Delta_g+V:L^2(M)\rightarrow H^{-2}(M) \, \text{ is invertible, with bounded inverse}.
\end{equation}
In such a scenario, if $\cG:H^{-2}(M)\rightarrow L^2(M)$ denotes the inverse of
\eqref{khaba-GGG}, it follows that
\begin{equation}\label{ggg}
\cG(-\Delta_g+V)f=f,\quad f\in\dom(T)=H^{3/2}_\Delta(\Omega_+)\times H^{3/2}_\Delta(\Omega_-).
\end{equation}
Then the (abstract) single and double layer potentials in \eqref{slpot} and \eqref{dlpot} are given by
\begin{equation}\label{slpot2}
\cS:L^2(\cC)\rightarrow L^2(M),\quad\varphi\mapsto\cG\Upsilon_0^*\varphi=\cG(\tau_D^+)^*\varphi,
\end{equation}
and
\begin{equation}\label{dlpot2}
\cD:L^2(\cC)\rightarrow L^2(M),\quad\varphi\mapsto\cG\Upsilon_1^*\varphi=\cG(-\tau_N^+)^*\varphi.
\end{equation}
Moreover, if $E(x,y)$ is the integral kernel of $\cG$, then the action of these abstract single and
double layer potentials on a function $\varphi\in L^2(\cC)$ may be explicitly written as
\begin{equation}\label{slpot2-XXX}
(\cS\varphi)(x)=\int_{\cC}E(x,y)\varphi(y)\,d\sigma_{\!g}(y),\quad x\in M,
\end{equation}
and
\begin{equation}\label{dlpot2-XXX}
(\cD \varphi)(x)=\int_{\cC}\big\langle\mathfrak{n}^{+}(y),{\rm grad}_{g_y}E(x,y)\big\rangle_{T_yM}
\varphi(y)\,d\sigma_{\!g}(y),\quad x\in M,
\end{equation}
which are in agreement with \eqref{slpot2-XXX-YYa}, \eqref{dlpot2-XXX-YYa}.
Furthermore, the abstract jump relations in \eqref{eq:Bracket1}--\eqref{eq:Bracket2} are
\begin{align}
[\Gamma_0f] &= \Gamma_0^+f_+-\Gamma_0^-f_-=\tau_D^+f_+-\tau_D^-f_-,\quad f=(f_+,f_-)^\top\in\dom(T),
\label{eq:Bracket1a}\\
[\Gamma_1f] &= \Gamma_1^+f_++\Gamma_1^-f_-=-\tau_N^+f_+-\tau_N^-f_-,\quad f=(f_+,f_-)^\top\in\dom (T).
\label{eq:Bracket2a}
\end{align}
As a consequence of the above considerations and Theorem~\ref{t3.7} we
obtain the following version of Green's third identity for the Schr\"odinger
operator $-\Delta_g+V$ on $M$.

\begin{theorem}\label{y5art-CCffa}
With the fundamental solution operator $\cG$ in \eqref{ggg}, the layer potentials in
\eqref{slpot2}--\eqref{dlpot2}, and the jump relations in
\eqref{eq:Bracket1a}--\eqref{eq:Bracket2a}, one has
\begin{equation}\label{eq:u2}
f=\cG Tf+\cD[\Gamma_0 f]-\cS[\Gamma_1 f], \quad f \in \dom(T),
\end{equation}
$($i.e., $f=(f_+,f_-)^\top$ with $f_\pm\in H^{3/2}_\Delta(\Omega_\pm)$$)$.
\end{theorem}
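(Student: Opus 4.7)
The plan is to deduce Theorem~\ref{y5art-CCffa} directly from the abstract third Green identity in Theorem~\ref{t3.7}. This requires verifying Hypotheses~\ref{h1} and \ref{h2} in the present concrete setting and then matching the abstract objects in \eqref{eq:u} with their concrete incarnations already introduced in this section.

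Hypothesis~\ref{h1} is immediate from Lemma~\ref{tapasst}: the operator obtained by coupling the quasi boundary triples $\{L^2(\cC),\Gamma_0^\pm,\Gamma_1^\pm\}$ of Theorem~\ref{qbtlip} through the transmission conditions $\Gamma_0^+f_+=\Gamma_0^-f_-$ and $\Gamma_1^+f_+=-\Gamma_1^-f_-$ coincides with the self-adjoint Schr\"odinger realization $A=-\Delta_g+V$ on $H^2(M)$.

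For Hypothesis~\ref{h2}, I would invoke the standing positivity assumption \eqref{khaba-VVV} on $V$, which through \eqref{khaba-GGG} guarantees that $-\Delta_g+V$ is a Banach space isomorphism from $L^2(M)$ onto $H^{-2}(M)$ with bounded inverse $\cG$. The defining identity \eqref{eq:cA} for the dual operator,
\begin{equation}
{}_{H^{-2}(M)}\langle \cA f,g\rangle_{H^2(M)}=(f,Ag)_{L^2(M)},\quad f\in L^2(M),\; g\in H^2(M),
\end{equation}
combined with the self-adjointness of $A$ on $H^2(M)=\sH_2$, identifies $\cA:L^2(M)\to H^{-2}(M)$ with the canonical extension of $-\Delta_g+V$ by duality. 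Consequently $\cG\cA f=f$ holds for every $f\in L^2(M)$, and hence for every $f\in\dom(T)\subset L^2(M)$, as recorded in \eqref{ggg}. This is exactly Hypothesis~\ref{h2}.

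With both hypotheses in force, Theorem~\ref{t3.7} supplies the identity $f=\cG Tf+\cD[\Gamma_0 f]-\cS[\Gamma_1 f]$ in its abstract form. The concrete specializations $\Upsilon_0=\tau_D^+$, $\Upsilon_1=-\tau_N^+$ recorded in \eqref{y5rrrafa} transform $\cS=\cG\Upsilon_0^*$ and $\cD=\cG\Upsilon_1^*$ into the layer potentials of \eqref{slpot2}--\eqref{dlpot2}, whose integral representations \eqref{slpot2-XXX}--\eqref{dlpot2-XXX} follow from the Schwartz kernel description of $\cG$; simultaneously the abstract jumps $[\Gamma_0 f]$ and $[\Gamma_1 f]$ become the concrete trace jumps \eqref{eq:Bracket1a}--\eqref{eq:Bracket2a}. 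The only step where genuine content is needed is the identification of $\cA$ with $-\Delta_g+V$ as maps $L^2(M)\to H^{-2}(M)$ underlying Hypothesis~\ref{h2}; the remainder is a transcription of the abstract identity \eqref{eq:u} into the concrete notation already prepared in Section~\ref{s5}, yielding \eqref{eq:u2}.
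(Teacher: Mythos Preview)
Your proposal is correct and follows essentially the same approach as the paper, which simply states that the theorem is ``a consequence of the above considerations and Theorem~\ref{t3.7}.'' You have made explicit precisely those considerations: Hypothesis~\ref{h1} via Lemma~\ref{tapasst}, Hypothesis~\ref{h2} via \eqref{khaba-VVV}--\eqref{ggg}, and the identification of the abstract objects with their concrete counterparts in \eqref{y5rrrafa}--\eqref{eq:Bracket2a}.
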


In conclusion, we note that boundary triples for elliptic operator in an unbounded external domain
$\Omega_- \subset \bbR^n$, used as an illustration in the introduction,
were studied, for instance, in \cite{BLL13}, \cite{Ma10}. The third Green formula in this situation and its analog in connection with noncompact Riemannian manifolds $M$ requires additional techniques to be discussed elsewhere.

\medskip

\noindent {\bf Acknowledgments.} We are indebted to Mark Ashbaugh, Damir Kinzebulatov, 
and Michael Pang for very helpful discussions.


\end{document}